\definecolor{hrefcolor}{rgb}{0.0,0.5,0.8}
\definecolor{hlgreen}{rgb}{0,0.7,0}
\newcounter{remcount}
\newtheorem{theorem}{Theorem}
\newtheorem{lemma}{Lemma}
\theoremstyle{definition}
\newtheorem{definition}{Definition}
\newtheorem*{assumption*}{Assumption}
\newtheorem{remark}{Remark}
\newtheorem*{remark*}{Remark}
\newtheorem*{definition*}{Definition}
\newtheorem{algorithm}{Algorithm}
\numberwithin{equation}{section}
\numberwithin{lemma}{section}
\numberwithin{theorem}{section}
\numberwithin{proposition}{section}
\numberwithin{definition}{section}
\numberwithin{remark}{section}
\numberwithin{example}{section}
\numberwithin{assumption}{section}
\numberwithin{algorithm}{section}
\numberwithin{corollary}{section}
\newcommand{\field}[1]{\mathbb{#1}}
\newcommand{\R}{\field{R}}
\newcommand{\norm}[1]{\|#1\|}
\newcommand{\abs}[1]{|#1|}
\newcommand{\grad}{\nabla}
\newcommand{\freevar}{\,\boldsymbol\cdot\,}
\newcommand{\Union}\bigcup
\newcommand{\Isect}\bigcap
\newcommand{\union}\cup
\newcommand{\isect}\cap
\newcommand{\bigunion}\bigcup
\newcommand{\bigisect}\bigcap
\newcommand{\defeq}{:=}
\DeclareMathOperator*{\argmin}{arg\,min}
\DeclareMathOperator{\sign}{sgn}
\def \uminus@sym{\setbox0=\hbox{$\cup$}\rlap{\hbox 
        to\wd0{\hss\raise0.5ex\hbox{$\scriptscriptstyle{-}$}\hss}}\box0}
    \def \uminus    {\mathrel{\uminus@sym}}
\newcommand{\mathvar}[1]{\textup{#1}}
\renewcommand{\tilde}{\widetilde}
\newcommand{\iprod}[2]{\langle #1,#2\rangle}
\DeclareMathOperator{\Sym}{Sym}
\newcommand{\BDspace}{\mathvar{BD}}
\newcommand{\BVspace}{\mathvar{BV}}
\newcommand{\Meas}{\mathcal{M}}
\renewcommand{\d}{\,d} 
\newcommand{\TGV}{\mathvar{TGV}}
\newcommand{\TV}{\mathvar{TV}}
\def \weaktostar@sym{\setbox0=\hbox{$\rightharpoonup$}\rlap{\hbox 
        to\wd0{\hss\raise1ex\hbox{$\scriptscriptstyle{*\,}$}\hss}}\box0}
    \def \weaktostar    {\mathrel{\weaktostar@sym}}
\theoremstyle{definition}
\newtheorem{assumptionx}{Assumption}
\DeclareMathOperator{\KRTV}{KRTV}
\def\DIMurange{m}
\def\DIMdomain{n}
\def\DIMkurange{d}
\def\pixels{\ell}
\def\alphavec{\vec\alpha}
\def\ee{\text{\textsc{e}}}
\def\costltwo{{\ensuremath{L_2^2}}}
\def\costhubertv{{\ensuremath{L_\eta^1\!\nabla}}}
\def\costsymbreg{{\ensuremath{B_sL_1\!\nabla_\eta}}}
\def\NA{N}
\newcommand{\Jmu}[2][\lambda,\alpha]{J^{\gamma,\mu}(#2; #1)}
\def\DiagO{\mathfrak{D}}
\def\maxO{\mathfrak{m}}
\def\nO{\mathfrak{N}}
\def\projO{\mathfrak{P}}
\def\costf{F}
\newcommand{\huber}[2][\gamma]{\abs{#2}_{#1}}
\def\LaplaceU{\Delta}
\def\Step{\delta}
\def\SPACEuHdual{H^{1}(\Omega; \R^\DIMurange)}
\newcommand{\SPACEkuLp}[2][\DIMkurange]{L^{#2}(\Omega; \R^{#1})}
\newcommand{\SPACEkuLtwo}[1][\DIMkurange]{\SPACEkuLp[#1]{2}}
\def\SPACEuLtwo{L^2(\Omega; \R^{\DIMurange})}
\newcommand{\ICTV}{\mathvar{ICTV}}
\newcommand{\POLYTV}{\mathvar{POLYTV}}
\newlength{\colw}
\def\iftgv#1#2#3{{\def\iftgvpar{#1}\def\iftgvtgv{tgv2}\ifx\iftgvpar\iftgvtgv#2\else#3\fi}}
\def\ifkrtv#1#2#3{{\def\ifkrtvpar{#1}\def\ifkrtvkrtv{krtv2}\ifx\ifkrtvpar\ifkrtvkrtv#2\else#3\fi}}
\def\ifictv#1#2#3{{\def\ifictvpar{#1}\def\ifictvictv{ictv}\ifx\ifictvpar\ifictvictv#2\else#3\fi}}
\def\ifpolytv#1#2#3{{\def\ifpolytvpar{#1}\def\ifpolytvpolytv{polytv}\ifx\ifpolytvpar\ifpolytvpolytv#2\else#3\fi}}
\def\iftwoparam#1#2#3{\iftgv{#1}{#2}{\ifictv{#1}{#2}{#3}}}
\def\ifhuber#1#2#3{{\def\ifhpar{#1}\def\ifhh{huberonly}\ifx\ifhpar\ifhh#2\else#3\fi}}
\def\ifsymbreg#1#2#3{{\def\ifhpar{#1}\def\ifhh{symbregman}\ifx\ifhpar\ifhh#2\else#3\fi}}
\def\ifssn#1#2#3{{\def\ifhpar{#1}\def\ifhh{ssn}\ifx\ifhpar\ifhh#2\else#3\fi}}
\def\mathname#1{\iftgv{#1}{$\TGV^2$}{\ifkrtv{#1}{$\KRTV$}{\ifictv{#1}{$\ICTV$}{\ifpolytv{#1}{$\POLYTV_1$}{$\TV$}}}}}
\def\costname#1{\ifhuber{#1}{\costhubertv}{\ifsymbreg{#1}{\costsymbreg}{\costltwo}}}
\def\TVINIT{(\alpha_{\TV}^*/\pixels, \alpha_{\TV}^*)}
\newcommand{\methodnamex}[2][]{
    \ifssn{#2}{SSN#1, $\epsilon=\RESmu$, $\gamma=\RESgamma$}{Chambolle-Pock#1}
}
\def\XXint#1#2#3{{\setbox0=\hbox{$#1{#2#3}{\int}$ }
\vcenter{\hbox{$#2#3$ }}\kern-.6\wd0}}
\newcommand{\resplotxx}[2][]{
    \setlength{\colw}{\textwidth}
    \begin{tikzpicture}
        \pgftext[at=\pgfpoint{0}{0},left,bottom]{%
            \includegraphics[width=\colw]{{#2}.png}
        }
        #1
    \end{tikzpicture}%
}
\newlength{\imw}
\newcommand{\resplot}[5][]{
    \begin{subfigure}[t]{\imw}%
    \resplotxx[#1]{resimg/#2}
    \caption{\mathname{#3} denoising, \costname{#4} cost}
    \ifdefined\subfigprefix\label{\subfigprefix:#3-#4}\else\relax\fi 
    \end{subfigure}
    }
\newcommand{\inplot}[3][]{
    \begin{subfigure}[t]{\imw}%
    \resplotxx[#1]{img/#2}
    \caption{#3}
    \ifdefined\subfigprefix\label{\subfigprefix:#2}\else\relax\fi
    \end{subfigure}
    }
\def\SQl{48}
\def\SQb{160}
\def\SQr{112}
\def\SQt{224}
\newcommand{\igraph}[2]{\includegraphics[#1,bb=48 160 112 224,clip]{{resimg/#2}}}
\newlength{\scf}
\newcommand{\drawzoomarea}{\draw[line width=1.5,dashed,color=red] (\SQl\scf, \SQb\scf) rectangle (\SQr\scf, \SQt\scf);}
\newcommand{\resplotzxx}[2][]{
    \setlength{\colw}{\textwidth}
    \setlength{\scf}{0.0039\colw} 
    \begin{tikzpicture}
        \pgftext[at=\pgfpoint{0}{0},left,bottom]{%
            \includegraphics[width=\colw]{{resimg/#2}.png}
        }
        \pgftext[at=\pgfpoint{0.55\colw}{0.05\colw},left,bottom]{%
            \igraph{width=0.4\colw}{#2}
        }
        \draw[line width=1.5, color=red] (0.55\colw, 0.05\colw) rectangle (0.95\colw, 0.45\colw);
        #1
    \end{tikzpicture}%
}
\newcommand{\restabll}[6][]{%
    \input{resimg/#2-vals.tex}%
    \mathname{#3} &  
    \costname{#4} & 
    #5 & 
    \ifpolytv{#3}{%
        $(\RESalphaSCone,\RESalphaSCtwo,\RESalphaSCthree,\RESalphaSCfour)/\pixels$%
    }{%
        \iftwoparam{#3}%
              {$(\RESalphaSCtwo/\pixels^2, \RESalphaSCone/\pixels)$}%
              {$\RESalphaSCone/\pixels$}
    } &
   \RESdist & 
   \RESssim & 
   \RESpsnr &
   \RESiters &
   \def\arg{#6}
   \def\nofig{}
   \ifx\arg\nofig\else\ref{#6}(\subref{#6:#3-#4})\fi \\%
}
\newcounter{imcounter}
\newcounter{annotationcounter}
\newcommand{\immatrixX}[4]{
    \setcounter{imcounter}{1}
    \def\IMX##1##2{%
        \setcounter{annotationcounter}{0}%
        \begin{tikzpicture}%
            \pgftext[at=\pgfpoint{0}{0},left,top]{%
                \includegraphics[width=\textwidth/#3]{#1/#2_##1}%
            }%
            ##2%
        \end{tikzpicture}%
        \stepcounter{imcounter}%
        \ifnum \value{imcounter} > #3 %
            \setcounter{imcounter}{1}%
            \\%
        \fi%
    }
    \def\IM##1{\IMX{##1}{}}
    {\setlength{\lineskip}{0pt}%
    \input{#4}%
    }
}
\newcommand{\immatrix}[3]{
    \immatrixX{#1}{#2}{#3}{#1/filelist.tex}
}
\begin{document}

\title[Optimal higher-order total variation regularisation]{Bilevel parameter learning for higher-order total variation regularisation models$^*$}
\author{J.C. De los Reyes$^1$, C.-B. Sch\"onlieb$^2$ and T. Valkonen$^{2}$}
\address{$^1$Research Center on Mathematical Modelling (MODEMAT), Escuela Polit\'ecnica Nacional, Quito, Ecuador.}
\address{$^2$Department of Applied Mathematics and Theoretical Physics, University of Cambridge, Cambridge, United Kingdom.}
\thanks{$^*$This research has been supported by King Abdullah University of Science and Technology (KAUST) Award No.~KUK-I1-007-43, EPSRC grants Nr.~EP/J009539/1 ``Sparse \& Higher-order Image Restoration'' and Nr.~EP/M00483X/1 ``Efficient computational tools for inverse imaging problems'', Escuela Politécnica Nacional de Quito Award No. PIS 12-14 and MATHAmSud project SOCDE ``Sparse Optimal Control of Differential Equations''. While in Quito, T.~Valkonen has moreover been supported by SENESCYT (Ecuadorian Ministry of Higher Education, Science, Technology and Innovation) under a Prometeo Fellowship.}

\maketitle

\begin{abstract}
We consider a bilevel optimisation approach for parameter learning in higher-order total variation image reconstruction models. Apart from the least squares cost functional, naturally used in bilevel learning, we propose and analyse an alternative cost, based on a Huber regularised TV-seminorm. 
Differentiability properties of the solution operator are verified and a first-order optimality system is derived. Based on the adjoint information, a quasi-Newton algorithm is proposed for the numerical solution of the bilevel problems. Numerical experiments are carried out to show the suitability of our approach and the improved performance of the new cost functional. Thanks to the bilevel optimisation framework, also a detailed comparison between $\TGV^2$ and $\ICTV$ is carried out, showing the advantages and shortcomings of both regularisers, depending on the structure of the processed images and their noise level.
\end{abstract}

\section{Introduction}

In this paper we propose a bilevel optimisation approach for parameter learning in higher-order total variation regularisation models for image restoration. The reconstruction of an image from imperfect measurements is essential for all research which relies on the analysis and interpretation of image content. Mathematical image reconstruction approaches aim to maximise the information gain from acquired image data by intelligent modelling and mathematical analysis.


A variational image reconstruction model can be formalised as follows. Given data $f$ which is related to an image (or to certain image information, e.g. a segmented or edge detected image) $u$ through a generic forward operator (or function) $K$ the task is to retrieve $u$ from $f$. In most realistic situations this retrieval is complicated by the ill-posedness of $K$ as well as random noise in $f$. A widely accepted method that approximates this ill-posed problem by a well-posed one and counteracts the noise is the method of Tikhonov regularisation. That is, an approximation to the true image is computed as a minimiser of 
\begin{equation}\label{reg1:eq}
\alpha~ R(u) + d(K(u),f),
\end{equation}
where $R$ is a regularising energy that models a-priori knowledge about the image $u$, $d(\cdot , \cdot)$ is a suitable distance function that models the relation of the data $f$ to the unknown $u$, and $\alpha>0$ is a parameter that balances our trust in the forward model against the need of regularisation. The parameter $\alpha$ in particular, depends on the amount of ill-posedness in the operator $K$ and the amount (amplitude) of the noise present in $f$. A key issue in imaging inverse problems is the correct choice of $\alpha$, image priors (regularisation functionals $R$), fidelity terms $d$ and (if applicable) the choice of what to measure (the linear or nonlinear operator $K$). Depending on this choice, different reconstruction results are obtained. 


While functional modelling \eqref{reg1:eq} constitutes a mathematically rigorous and physical way of setting up the reconstruction of an image -- providing reconstruction guarantees in terms of error and stability estimates -- it is limited with respect to its adaptivity for real data. On the other hand, data-based modelling of reconstruction approaches is set up to produce results which are optimal with respect to the given data. However, in general it neither offers insights into the structural properties of the model nor provides comprehensible reconstruction guarantees. Indeed, we believe that for the development of reliable, comprehensible and at the same time effective models \eqref{reg1:eq} it is essential to aim for a unified approach that seeks tailor-made regularisation and data models by combining model- and data-based approaches.


%

To do so we focus on a bilevel optimisation strategy for finding an optimal setup of variational regularisation models \eqref{reg1:eq}. That is, for a given training pair of noisy and original clean images $(f,f_0)$, respectively, we consider a learning problem of the form 
\begin{equation}\label{eq:generprob}
\min F(u^*) = cost(u^*,f_0) \quad \textrm{subject to} \quad u^*\in \argmin_u \left\{\alpha~ R(u) + d(K(u),f)\right\},
\end{equation}
where $F$ is a generic cost functional that measures the fitness of $u^*$ to the original image $f_0$. The argument of the minimisation problem will depend on the specific setup (i.e. the degrees of freedom) in the constraint problem \eqref{reg1:eq}. In particular, we propose a bilevel optimisation approach for learning optimal parameters in higher-order total variation regularisation models for image reconstruction in which the arguments of the optimisation constitute parameters in front of the first- and higher-order regularisation terms. Rather than working on the discrete problem, as is done in standard parameter learning and model optimisation methods, we optimise the regularisation models in infinite dimensional function space. We will explain this approach in more detail in the next section. Before, let us give an account to the state of the art of bilevel optimisation for model learning. In machine learning bilevel optimisation is well established. It is a semi-supervised learning method that optimally adapts itself to a given dataset of measurements and desirable solutions. In \cite{tappen2007utilizing,domke2012generic,chen2014}, for instance the authors consider bilevel optimization for finite dimensional Markov random field models. In inverse problems the optimal inversion and experimental acquisition setup is discussed in the context of optimal model design in works by Haber, Horesh and Tenorio \cite{haber2003learning,haber2010numerical}, as well as Ghattas et al. \cite{bui2008model,biegler2011large}.  Recently parameter learning in the context of functional variational regularisation models \eqref{reg1:eq} also entered the image processing community with works by the authors \cite{de2013image,calatronidynamic}, Kunisch, Pock and co-workers \cite{kunisch2013bilevel,Chen2012}, Chung et al. \cite{chung2014optimal} and Hinterm\"uller et al. \cite{hintermuller2014bilevel}. 

Apart from the work of the authors \cite{de2013image,calatronidynamic}, all approaches so far are formulated and optimised in the discrete setting. Our subsequent modelling, analysis and optimisation will be carried out in function space rather than on a discretisation of \eqref{reg1:eq}. While digitally acquired image data is of course discrete, the aim of high resolution image reconstruction and processing is always to compute an image that is close to the real (analogue, infinite dimensional) world. Hence, it makes sense to seek images which have certain properties in an infinite dimensional function space. That is, we aim for a processing method that accentuates and preserves qualitative properties in images independent of the resolution of the image itself \cite{viola2012unifying}. Moreover, optimisation methods conceived in function space potentially result in numerical iterative schemes which are resolution and mesh-independent upon discretisation \cite{hintermuller2006infeasible}.

Higher-order total variation regularisation has been introduced as an extension of the standard total variation regulariser in image processing. As the {\bf T}otal {\bf V}ariation (TV) \cite{Rud1992} and many more contributions in the image processing community have proven, a non-smooth first-order regularisation procedure results in a nonlinear smoothing of the image, smoothing more in homogeneous areas of the image domain and preserving characteristic structures such as edges. In particular, the TV regulariser is tuned towards the preservation of edges and performs very well if the reconstructed image is piecewise constant. The drawback of such a regularisation procedure becomes apparent as soon as images or signals (in 1D) are considered which do not only consist of constant regions and jumps, but also possess more complicated, higher-order structures, e.g. piecewise linear parts. The artefact introduced by TV regularisation in this case is called staircasing \cite{ring2000structural}. One possibility to counteract such artefacts is the introduction of higher-order derivatives in the image regularisation. Chambolle and Lions \cite{chambolle97image}, for instance, propose a higher order method by means of an infimal convolution of the TV and the TV of the image gradient called {\bf I}nfimal-{\bf C}onvolution {\bf T}otal {\bf V}ariation (ICTV) model. Other approaches to combine first and second order regularisation originate, for instance, from Chan, Marquina, and Mulet \cite{chan2000highorder} who consider total variation minimisation together with weighted versions of the Laplacian, the Euler-elastica functional \cite{masnou1998level,chan2002euler} which combines total variation regularization with curvature penalisation, and many more \cite{lysaker2006iterative,papafitsoros2012combined} just to name a few. Recently Bredies et al. have proposed {\bf T}otal {\bf G}eneralized {\bf V}ariation (TGV) \cite{bredies2011tgv} as a higher-order variant of TV regularisation.

In this work we mainly concentrate on two second-order total variation models: the recently proposed TGV \cite{bredies2011tgv} and the ICTV model of Chambolle and Lions \cite{chambolle97image}. We focus on second-order TV regularisation only since this is the one which seems to be most relevant in imaging applications \cite{knoll2010second,bredies2012total}. For $\Omega\subset\mathbb R^2$ open and bounded and $u\in BV(\Omega)$, the ICTV regulariser reads
\begin{equation}\label{eq:ictv}
\ICTV_{\alpha,\beta}(u)\defeq \min_{v\in W^{1,1}(\Omega),~ \nabla v\in BV(\Omega)} \alpha \norm{Du-\nabla v}_{\Meas(\Omega; \R^2)} + \beta \norm{D\nabla v}_{\Meas(\Omega; \R^{2\times 2})}.
\end{equation}
On the other hand, second-order TGV \cite{sampta2011tgv,l1tgv} for $u\in BV(\Omega)$ reads
\begin{equation}
    \label{eq:tgv}
    \TGV^2_{\alpha,\beta}(u) 
    \defeq
    \min_{w \in BD(\Omega)}
    \alpha \norm{Du-w}_{\Meas(\Omega; \R^2)}
    +
    \beta \norm{Ew}_{\Meas(\Omega; \Sym^2(\R^2))}.
\end{equation}
Here $
    \BDspace(\Omega)
    \defeq
    \{ w \in L^1(\Omega; \R^\DIMdomain)
        \mid
        \norm{Ew}_{\Meas(\Omega; \R^{\DIMdomain \times \DIMdomain})}
     < \infty \}
$
is the space of vector fields of bounded deformation on $\Omega$, $E$ denotes the \emph{symmetrised gradient} and $\mathrm{Sym}^2(\mathbb{R}^2)$ the space of symmetric tensors of order $2$ with arguments in $\mathbb{R}^2$. The parameters $\alpha,\beta$ are fixed positive parameters and will constitute the arguments in the special learning problem \'a la \eqref{eq:generprob} we consider in this paper. The main difference between \eqref{eq:ictv} and \eqref{eq:tgv} is that we do not generally have that $w=\nabla v$ for any function $v$. That results in some qualitative differences of ICTV and TGV regularisation, compare for instance \cite{benning2011higher}. Substituting $\alpha R(u)$ in \eqref{reg1:eq} by $\TGV^2_{\alpha,\beta}(u)$ or $\ICTV_{\alpha,\beta}(u)$ gives the TGV image reconstruction model and the ICTV image reconstruction model, respectively. In this paper we only consider the case $K=Id$ identity and $d(u,f)=\|u-f\|_{L^2(\Omega)}^2$ in \eqref{reg1:eq} which corresponds to an image de-noising model for removing Gaussian noise. With our choice of regulariser the former scalar $\alpha$ in \eqref{reg1:eq} has been replaced by a vector $(\alpha,\beta)$ of two parameters in \eqref{eq:tgv} and \eqref{eq:ictv}. The choice of the entries in this vector now do not only determine the overall strength of the regularisation (depending on the properties of $K$ and the noise level) but those parameters also balance between the different orders of regularity of the function $u$, and their choice is indeed crucial for the image reconstruction result. Large $\beta$ will give regularised solutions that are close to TV regularised reconstructions, compare Figure \ref{fig:intro}. Large $\alpha$ will result in TV$^2$ type solutions, that is solutions that are regularised with TV of the gradient \cite{hinterberger2006variational,papafitsoros2012combined}, compare Figure \ref{fig:intro2}.  With our approach described in the next section we propose a learning approach for choosing those parameters optimally, in particular optimally for particular types of images. 

\begingroup
    \def\SQl{128}
    \def\SQb{160}
    \def\SQr{192}
    \def\SQt{224}
    \renewcommand{\igraph}[2]{\includegraphics[#1,bb=128 160 192 224,clip]{{resimg/#2}.png}}

    \begin{figure}[h!]
        \centering
        \begin{subfigure}[t]{0.3\textwidth}%
            \resplotzxx{intro-low}
            \caption{Too low $\beta$ / High oscillation}
        \end{subfigure}
        \begin{subfigure}[t]{0.3\textwidth}%
            \resplotzxx[\drawzoomarea]{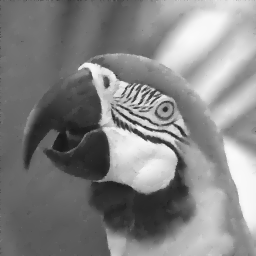}
            \caption{Optimal $\beta$}
        \end{subfigure}
        \begin{subfigure}[t]{0.3\textwidth}%
            \resplotzxx{intro-high}
            \caption{Too high $\beta$ / almost $\TV$}
        \end{subfigure}
        \caption{Effect of $\beta$ on $\TGV^2$ denoising with optimal $\alpha$}
        \label{fig:intro}
    \end{figure}

    \begin{figure}[h!]
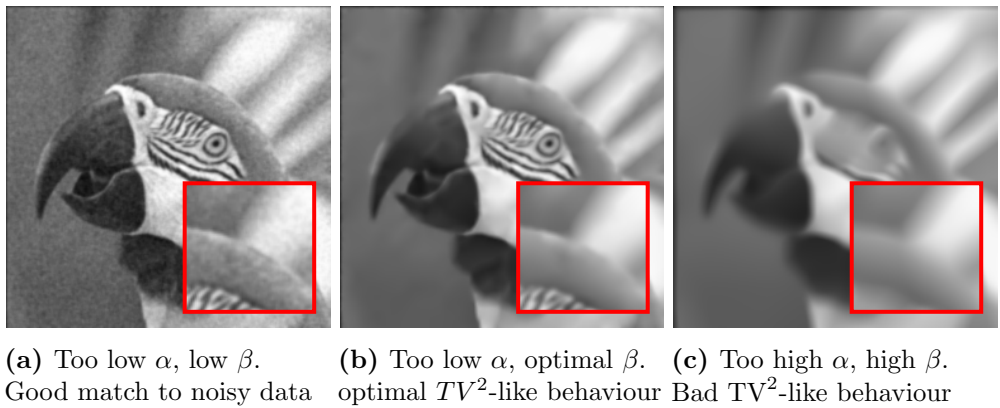

        \centering
        \begin{subfigure}[t]{0.3\textwidth}%
            \resplotzxx{intro-alphahigh-betalow}
            \caption{Too low $\alpha$, low $\beta$. \\Good match to noisy data}
        \end{subfigure}
        \begin{subfigure}[t]{0.3\textwidth}%
            \resplotzxx{intro-alphahigh-betaopt}
            \caption{Too low $\alpha$, optimal $\beta$. \\ optimal $TV^2$-like behaviour}
        \end{subfigure}
        \begin{subfigure}[t]{0.3\textwidth}%
            \resplotzxx{intro-alphahigh-betahigh}
            \caption{Too high $\alpha$, high $\beta$.\\ Bad $\TV^2$-like behaviour}
        \end{subfigure}
        \caption{Effect of choosing $\alpha$ too large in $\TGV^2$ denoising}
        \label{fig:intro2}
    \end{figure}
\endgroup

For the existence analysis of an optimal solution as well as for the derivation of an optimality system for the corresponding learning problem \eqref{eq:generprob} we will consider a smoothed version of the constraint problem \eqref{reg1:eq} -- which is the one in fact used in the numerics. That is, we replace $R(u)$ -- being TV, TGV or ICTV in this paper -- by a Huber regularised version and add an $H^1$ regularisation with a small weight to \eqref{reg1:eq}. In this setting and under the special assumption of box constraints on $\alpha$ and $\beta$ we provide a simple existence proof for an optimal solution. A more general existence result that holds also for the original non-smooth problem and does not require box constraints is derived in \cite{tuomov-interior} and we refer the reader to this paper for a more sophisticated analysis on the structure of solutions.


A main challenge in the setup of such a learning approach is to decide what is the best way to measure fitness (optimality) of the model. In our setting this amounts to choosing an appropriate distance $F$ in \eqref{eq:generprob} that measures the fitness of reconstructed images to the `perfect', noise-free images in an appropriate training set. We have to formalise what we mean by an optimal reconstruction model. Classically, the difference between the original, noise-free image $f_0$ and its regularised version $u_{\alpha,\beta}$ is computed with an $\costltwo$ cost functional
\begin{equation}\label{eq:costL2}
\costf_{\costltwo}(u_{\alpha,\beta}) = \norm{u_{\alpha,\beta} - f_0}_{L^2(\Omega)}^2,
\end{equation}
which is closely related to the PSNR quality measure. Apart from this, we propose in this paper an alternative cost functional based on a Huberised total variation cost
\begin{equation}\label{eq:costTVhub}
\costf_{\costhubertv}(u_{\alpha,\beta}) \defeq \int_\Omega \huber[\gamma]{D(u_{\alpha,\beta}-f_0)}~ dx,
\end{equation}
where the Huber regularisation $\huber[\gamma]{\cdot}$ will be defined later on in Definition \ref{def:huber}. We will see that the choice of this cost functional is indeed crucial for the qualitative properties of the reconstructed image.

The proposed bilevel approach has an important indirect consequence: It establishes a basis for the comparison of the different total variation regularisers employed in image denoising tasks. In the last part of the paper we exhaustively compare the performance of $\TV$, $\TGV^2$ and $\ICTV$ for various image datasets. The parameters are chosen optimally, according to the proposed bilevel approach, and different quality measures (like PSNR and SSIM) are considered for the comparison. The obtained results are enlightening about when to use each one of the considered regularisers. In particular, $\ICTV$ appears to behave better for images with arbitrary structure and moderate noise levels, whereas $\TGV^2$ behaves better for images with large smooth areas.\\


\noindent{\bf Outline of the paper} In Section \ref{sec:probstatex} we state the bilevel learning problem for the two higher-order total variation regularisation models, TGV and ICTV, and prove existence of an optimal parameter pair $\alpha,\beta$. The bilevel optimization problem is analysed in Section \ref{sec:Lagrange}, where existence of Lagrange multipliers is proved and an optimality system, as well as a gradient formula, are derived. Based on the optimality condition, a BFGS algorithm for the bilevel learning problem is devised in Section \ref{sec:denoising}. For the numerical solution of each denoising problem an infeasible semi-smooth Newton method is considered. Finally, we discuss the performance of the parameter learning method by means of several examples for the denoising of natural photographs in Section \ref{sec:experiments}. Therein, we also present a statistical analysis on how TV, ICTV and TGV regularisation compare in terms of returned image quality, carried out on 200 images from the Berkeley segmentation dataset BSDS300.

\section{Problem statement and existence analysis}\label{sec:probstatex}

We strive to develop a parameter learning method for higher-order total variation regularisation models that maximises the fit of the reconstructed images to training images simulated for an application at hand. For a given noisy image $f\in L^2(\Omega)$, $\Omega\subset\mathbb R^2$ open and bounded, we consider
\begin{equation}\label{tgvdenoise}
    \min_u \left\{R_{\alpha, \beta}(u) + \frac{1}{2} \|u-f\|_{L^2(\Omega)}^2\right\}.
\end{equation}
where, $\alpha,\beta\in\mathbb R$. We focus on TGV$^2$ and ICTV image denoising:
\begin{multline*}
R_{\alpha,\beta}(u)=\TGV^2_{\alpha,\beta}(u)\defeq
    \min_{w \in BD(\Omega)}
    \norm{\alpha~(Du-w)}_{\Meas(\Omega; \R^2)}
    \\+
    \norm{\beta~Ew}_{\Meas(\Omega; \Sym^2(\R^2))}.
\end{multline*}
and \eqref{eq:ictv} with spatial dependence
\begin{multline*}
R_{\alpha,\beta}(u)=\ICTV_{\alpha, \beta}(u)\defeq\min_{\substack{v\in W^{1,1}(\Omega)\\ \nabla v\in BV(\Omega)}} \norm{\alpha~(Du-\nabla v)}_{\Meas(\Omega; \R^2)} \\
+ \norm{\beta~D\nabla v}_{\Meas(\Omega; \R^{2\times 2})},
\end{multline*}
for $u\in BV(\Omega)$. For this model, we want to determine the optimal choice of $\alpha,\beta$, given a particular type of images and a fixed noise level. More precisely, we consider a training pair $(f,f_0)$, where $f$ is a noisy image corrupted by normally distributed noise with a fixed variation, and the image $f_0$ represents the ground truth or an image that approximates the ground truth within a desirable tolerance. Then, we determine the optimal choice of $\alpha,\beta$ by solving the following problem
\begin{equation}\label{optimalcontrolbasic}
    \min_{(\alpha,\beta)\in\mathbb R^{2}} ~  \costf(u_{\alpha,\beta}) \quad \textrm{ s.t. } \alpha, \beta \geq 0,
\end{equation}
where $\costf$ equals the $\costltwo$ cost \eqref{eq:costL2} or the Huberised TV cost \eqref{eq:costTVhub} and $u_{\alpha,\beta}$ for a given $f$ solves a regularised version of the minimization problem \eqref{tgvdenoise} that will be specified in the next section, compare problem \eqref{eq:denoise-huber1}. This regularisation of the problem is a technical requirement for solving the bilevel problem that will be discussed in the sequel. In contrast to learning $\alpha,\beta$ in \eqref{tgvdenoise} in finite dimensional parameter spaces (as is the case in machine learning) we aim for novel optimisation techniques in infinite dimensional function spaces.

\subsection{Formal statement}

Let $\Omega \subset \R^\DIMdomain$ be an open bounded domain with Lipschitz boundary. This will be our image domain. Usually $\Omega=(0, w) \times (0, h)$ for $w$ and $h$ the width and height of a two-dimensional image, although no such assumptions are made in this work. Our data $f$ and $f_0$ are assumed to lie in $L^2(\Omega)$. 

In our learning problem, we look for parameters $(\alpha,\beta)$ that for some cost functional $F: H^1(\Omega) \to \R$ solve the problem
\begin{subequations} \label{eq:learn}
\begin{equation}
   \min_{(\alpha,\beta)\in\mathbb R^{2}} ~  \costf(u_{\alpha,\beta})
\end{equation}
subject to
\begin{align}
    & u_{\alpha,\beta} \in \argmin_{u\in H^1(\Omega)} \Jmu[\alpha,\beta]{u} \label{eq:denoise-huber1} \\ 
    & \alpha, \beta \geq 0,
\end{align}
\end{subequations}
where
\[
    \Jmu[\alpha,\beta]{u} \defeq
       \frac12 \norm{u-f}_{L^2(\Omega)}^2
        + R_{\alpha,\beta}^{\gamma,\mu}(u).
\]
Here $\Jmu[\alpha,\beta]{\cdot}$ is the regularised denoising functional that amends the regularisation term in \eqref{tgvdenoise} by a Huber regularised version of it with parameter $\gamma>0$, and an elliptic regularisation term with parameter $\mu>0$. In the case of TGV$^2$ the modified regularisation term $R_{\alpha,\beta}^{\gamma,\mu}(u)$ then reads for $u\in H^1(\Omega)$
\begin{multline*}
\TGV^{2,\gamma,\mu}_{\alpha,\beta}(u)\defeq
    \min_{w \in H^1(\Omega)}
    \int_\Omega \alpha~|Du-w|_\gamma~ dx
   \\ +
    \int_\Omega \beta~|Ew|_\gamma~ dx + \frac{\mu}{2} \left(\norm{u}_{H^1(\Omega)}^2 + \norm{w}_{\mathbb H^1(\Omega)}^2\right)
\end{multline*}
and in the case of ICTV we have
\begin{multline*}
\ICTV_{\alpha, \beta}^{\gamma,\mu}(u)\defeq\min_{\substack{v\in W^{1,1}(\Omega)\\\nabla v\in BV(\Omega,\mathbb R^n)\cap \mathbb H^1(\Omega)}} \int_\Omega \alpha ~|Du-\nabla v|_\gamma~dx \\
+ \int_\Omega \beta~|D\nabla v|_\gamma~ dx + \frac{\mu}{2} \left(\norm{u}_{H^1(\Omega)}^2 + \norm{\nabla v}_{\mathbb H^1(\Omega)}^2\right).
\end{multline*}
Here, $\mathbb H^1(\Omega)=H^1(\Omega;\mathbb R^n)$ and the Huber regularisation $|\cdot|_\gamma$ is defined as follows.

\begin{definition}\label{def:huber}
Given $\gamma \in (0, \infty]$, we  define for the norm $\norm{\freevar}_2$ on $\R^m$, 
the Huber regularisation
\[
    \huber[\gamma]{g} = 
    \begin{cases}
        \norm{g}_2 - \frac{1}{2\gamma}, & \norm{g}_2 \ge 1/\gamma,
        \\
        \frac{\gamma}{2}\norm{g}_2^2, & \norm{g}_2 < 1/\gamma.
    \end{cases}
\]
\end{definition}

For the cost functional $\costf$, given noise-free data $f_0 \in L^2(\Omega)$ and a regularised solution $u\in H^1(\Omega)$, we consider in particular the $L^2$ cost
\[
    \costf_{\costltwo}(u) \defeq \frac{1}{2}\norm{f_0 - u}_{\SPACEkuLtwo}^2, 
\]
as well as the Huberised total variation cost
\[
    \costf_{\costhubertv}(u) \defeq \int_\Omega\huber[\gamma]{D(f_0-u)}~ dx
\]
with noise-free data $f_0 \in \BVspace(\Omega)$.

\subsection{Existence of an optimal solution}
The existence of an optimal solution for the learning problem \eqref{eq:learn} is a special case of the class of bilevel problems considered in \cite{tuomov-interior}, where existence of optimal parameters in $(0,+\infty]^{2N}$ is proven. For convenience, we provide a simplified proof for the case where box constraints on the parameters are imposed. We start with an auxiliary lower semicontinuity result for the Huber regularised functionals.
\begin{lemma}\label{lem:huberlsc}
Let $u,v\in L^p(\Omega)$, $1\leq p<\infty$. Then, the functional $u \mapsto \int_\Omega |u-v|_\gamma~ dx$, where $|\cdot|_\gamma$ is the Huber regularisation in Definition \ref{def:huber}, is lower semicontinuous with respect to weak* convergence in $\Meas(\Omega; \R^\DIMkurange)$
\end{lemma}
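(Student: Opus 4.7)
The plan is to realise the functional $G(u) \defeq \int_\Omega |u-v|_\gamma \, dx$ as a supremum of weak*-continuous affine functionals of $u$, from which lower semicontinuity follows immediately.

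First, I would derive the pointwise dual representation
\[
|g|_\gamma = \sup_{\phi \in \R^\DIMkurange,\, \|\phi\|_2 \le 1} \Bigl\{g \cdot \phi - \tfrac{1}{2\gamma}\|\phi\|_2^2\Bigr\},
\]
either by directly maximising the right-hand side in $\phi$ and checking the two regimes of Definition~\ref{def:huber}, or by computing the Legendre--Fenchel conjugate $|\cdot|_\gamma^{\ast}(\phi) = \tfrac{1}{2\gamma}\|\phi\|_2^2$ on the closed unit ball ($+\infty$ outside) and invoking Fenchel--Moreau.

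Second, I would lift this pointwise identity to the integral level, aiming at
\[
G(u) = \sup_{\phi \in C_c(\Omega;\R^\DIMkurange),\, \|\phi\|_\infty \le 1}\left\{\int_\Omega (u-v)\cdot \phi \, dx - \tfrac{1}{2\gamma}\int_\Omega \|\phi\|_2^2 \, dx\right\}.
\]
The ``$\ge$'' direction is immediate by testing with any single admissible $\phi$. For ``$\le$'' I would first permit measurable test fields $\phi \in L^\infty(\Omega;\R^\DIMkurange)$ with $\|\phi(x)\|_2 \le 1$ a.e., exchanging the pointwise supremum with the integral via Rockafellar's theorem on convex integral functionals (or a direct measurable-selection construction of an almost-optimal $\phi$). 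Then I would approximate such a bounded measurable field by $\phi_n \in C_c(\Omega;\R^\DIMkurange)$ with $\|\phi_n\|_\infty \le 1$ converging to $\phi$ in $L^2(\Omega;\R^\DIMkurange)$, using $u - v \in L^1(\Omega)$ (since $\Omega$ is bounded and $p \ge 1$) to pass to the limit in the linear pairing and $L^2$-continuity of the quadratic penalty to handle the second term.

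Finally, for each fixed $\phi \in C_c(\Omega;\R^\DIMkurange)$ the map $u \mapsto \int_\Omega (u-v)\cdot \phi\, dx - \tfrac{1}{2\gamma}\int_\Omega \|\phi\|_2^2 \, dx$ is continuous with respect to weak* convergence of $u$ in $\Meas(\Omega;\R^\DIMkurange)$, because $\phi$ lies in the predual $C_0(\Omega;\R^\DIMkurange)$. The supremum of such weak*-continuous affine functionals is weak* lower semicontinuous, which gives the claim. The main obstacle is the approximation step: restricting from $L^\infty$ to $C_c$ test fields requires coordinated convergence in both the linear and quadratic terms of the dual integrand while preserving the unit-ball constraint, so a careful truncation-and-mollification of an almost-optimal measurable $\phi$ is needed.
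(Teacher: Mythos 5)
Your proposal is correct and follows essentially the same route as the paper: both express $\int_\Omega |u-v|_\gamma\,dx$ as a supremum, over continuous compactly supported test fields with pointwise unit-ball constraint, of affine functionals that are continuous for the weak* topology on $\Meas(\Omega;\R^\DIMkurange)$, and then conclude lower semicontinuity of the supremum. You are in fact more careful than the paper on the sup--integral interchange and on approximating measurable test fields by continuous ones (the paper simply asserts the integral dual formula and passes to a supremising sequence), and your quadratic penalty $\tfrac{1}{2\gamma}\norm{\phi}_2^2$ is the correct conjugate term for the Huber function of Definition~\ref{def:huber}.
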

\begin{proof}
Recall that for $g \in \R^m$, the Huber-regularised norm may be written in dual form as
    \begin{equation}
        \notag
        \huber[\gamma]{g}=
        \sup
        \Bigl\{
            \iprod{q}{g} - \frac{\gamma}{2} \norm{q}_2^2
            :
            \norm{q}_2 \le 1
        \Bigr\}.
    \end{equation}
    Therefore, we find that
\begin{multline*}
        G(u)\defeq\int_\Omega |u-v|_\gamma~ dx=\sup\Big\{
             \int_\Omega u(x)\cdot\varphi(x)~ dx - \int_\Omega \frac{\gamma}{2} \norm{\varphi(x)}_2^2 \d x :
             \\
                 \varphi \in C_c^\infty(\Omega),\ 
                 \norm{\varphi(x)}_2 \le \alpha \text{ for every } x \in \Omega
             \Big\}.
\end{multline*}
The functional $G$ is of the form $G(u) = \sup\{\iprod{u}{\varphi}-G^*(\varphi)\}$, where $G^*$ is the convex conjugate of $G$. Now, let $\{u^i\}_{i=1}^\infty$ converge to $u$ weakly* in $\Meas(\Omega; \R^\DIMkurange)$. Taking a supremising sequence $\{\varphi^j\}_{j=1}^\infty$ for
    this functional at any point $u$, we easily see lower semicontinuity
    by considering the sequences $\{\iprod{u^i}{\varphi^j}-G^*(\varphi^j)\}_{i=1}^\infty$
    for each $j$. 
\end{proof}

Our main existence result is the following.
\begin{theorem}
We consider the learning problem \eqref{eq:learn} for TGV$^2$ and ICTV regularisation, optimising over parameters $(\alpha,\beta)$ such that $0 \leq \alpha \leq \bar \alpha, 0 \leq \beta \leq \bar \beta$. Here $(\bar\alpha,\bar\beta)<\infty$ is an arbitrary but fixed vector in $\mathbb R^{2}$ that defines a box constraint on the parameter space. Then, there exists an optimal solution $(\hat\alpha,\hat\beta)\in\mathbb R^{2}$ for this problem for both choices of cost functionals, $\costf=\costltwo$ and $\costf=\costf_{\costhubertv}$.
\end{theorem}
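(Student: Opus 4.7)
My plan is to use the direct method of the calculus of variations: extract a convergent subsequence from a minimising sequence of parameters using compactness of the box $[0,\bar\alpha]\times[0,\bar\beta]$, pass to the limit inside the (joint) lower-level problem, and verify lower semicontinuity of the cost $\costf$ along this subsequence. I work out the $\TGV^2$ case; the $\ICTV$ argument is identical after replacing the auxiliary field $w$ by $\nabla v$.

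Let $(\alpha_n,\beta_n)$ be a minimising sequence and let $(u_n, w_n) \in H^1(\Omega) \times H^1(\Omega;\R^n)$ be the joint minimisers of the Huber-/elliptic-regularised energy, whose $u$-component is $u_{\alpha_n,\beta_n}$. Box compactness yields $(\alpha_n,\beta_n) \to (\hat\alpha,\hat\beta)$ along a subsequence. Testing the joint energy at $(u,w) = (0,0)$, and using $\huber[\gamma]{0}=0$, gives the a priori estimate
\[
    \frac{\mu}{2}\bigl( \norm{u_n}_{H^1(\Omega)}^2 + \norm{w_n}_{H^1(\Omega)}^2 \bigr)
    \le \frac{1}{2}\norm{f}_{L^2(\Omega)}^2
\]
uniformly in $n$. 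A key point is that this bound does not degenerate as $\alpha_n, \beta_n \to 0$: the elliptic regularisation alone provides coercivity, independently of the parameters. Extracting further, $(u_n, w_n) \rightharpoonup (\hat u, \hat w)$ weakly in $H^1 \times H^1$ and strongly in $L^2 \times L^2$ by Rellich--Kondrachov. Consequently $Du_n - w_n \rightharpoonup D\hat u - \hat w$ and $Ew_n \rightharpoonup E\hat w$ weakly in $L^2$, and hence weakly* in $\Meas$.

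The next step is to verify that $(\hat u, \hat w)$ is optimal for the lower-level problem at the limiting parameters. For an arbitrary test pair $(u, w) \in H^1(\Omega) \times H^1(\Omega;\R^n)$, joint optimality of $(u_n, w_n)$ gives the inequality whose right-hand side, evaluated at the fixed $(u, w)$, converges to the limit energy by continuity in the scalar parameters. For the left-hand side, the $L^2$ data term and the two $H^1$ quadratic terms are weakly lsc, while each Huber term is handled via the decomposition
\[
    \int_\Omega \alpha_n \huber[\gamma]{Du_n - w_n} \d x
    = (\alpha_n - \hat\alpha) \int_\Omega \huber[\gamma]{Du_n - w_n} \d x
      + \hat\alpha \int_\Omega \huber[\gamma]{Du_n - w_n} \d x.
\]
Since $\huber[\gamma]{\cdot} \le \norm{\cdot}_2$ the integral on the right is bounded in $n$, so the first summand vanishes as $\alpha_n \to \hat\alpha$; Lemma \ref{lem:huberlsc} applied to the second (with $v = 0$) contributes $\hat\alpha \int_\Omega \huber[\gamma]{D\hat u - \hat w}\d x$ as a lower bound. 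The $\beta_n$-term is analogous. Strict convexity of the joint energy, ensured by the $\mu > 0$ quadratic part, then identifies $\hat u = u_{\hat\alpha,\hat\beta}$.

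Finally, for the upper level: $\costf_{\costltwo}(u_n) \to \costf_{\costltwo}(\hat u)$ follows from strong $L^2$ convergence, while $\costf_{\costhubertv}(\hat u) \le \liminf_n \costf_{\costhubertv}(u_n)$ is Lemma \ref{lem:huberlsc} applied to $D(u_n - f_0) \rightharpoonup D(\hat u - f_0)$ in $L^2$, and hence weakly* in $\Meas$. Either way $\costf(\hat u) \le \liminf_n \costf(u_n) = \inf \costf$, so $(\hat\alpha, \hat\beta)$ is optimal. The step I expect to require most care is the uniform $H^1$ compactness when the parameters approach the degenerate boundary $\alpha = 0$ or $\beta = 0$; this is precisely what the $\mu$-term rescues, since the Huber regularisers offer no coercivity there.
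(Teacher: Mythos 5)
Your proposal is correct and follows essentially the same route as the paper's proof: the direct method with coercivity supplied by the elliptic $\mu$-term, weak $H^1\times\mathbb H^1$ compactness, and lower semicontinuity of the Huber terms via Lemma \ref{lem:huberlsc}. If anything, you are slightly more explicit than the paper at two points---bounding $\int_\Omega\huber[\gamma]{Du_n-w_n}\d x$ by $\huber[\gamma]{\cdot}\le\norm{\cdot}_2$ so that the $(\alpha_n-\hat\alpha)$ remainder vanishes even when $\alpha_n\to 0$, and invoking strict convexity to identify $\hat u=u_{\hat\alpha,\hat\beta}$---but these are refinements of the same argument, not a different one.
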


\begin{proof}
Let $(\alpha_n,\beta_n)\subset \mathbb R^{2}$ be a minimising sequence. Due to the box constraints we have that the sequence $(\alpha_n,\beta_n)$ is bounded in $ \mathbb R^{2}$. Moreover, we get for the corresponding sequences of states $u_n:= u_{(\alpha_n,\beta_n)}$ that
$$
\Jmu[\alpha_n,\beta_n]{u_n} \leq \Jmu[\alpha_n,\beta_n]{u}, \quad \forall u\in H^1(\Omega),
$$
in particular this holds for $u=0$. Hence,
\begin{equation}\label{eq:compbound}
 \frac12 \norm{u_n-f}_{L^2(\Omega)}^2
        + R_{\alpha_n,\beta_n}^{\gamma,\mu}(u_n) \leq \frac12 \norm{f}_{L^2(\Omega)}^2.
\end{equation}
Exemplarily, we consider here the case for the TGV regulariser, that is $R_{\alpha_n,\beta_n}^{\gamma,\mu} = \TGV^{2,\gamma,\mu}_{\alpha,\beta}$. The proof for the ICTV regulariser can be done in a similar fashion. Inequality \eqref{eq:compbound} in particular gives
$$
\norm{u_n}_{H^1(\Omega)}^2 + \norm{w_n}_{\mathbb H^1(\Omega)}^2 \leq \frac{1}{\mu} \norm{f}_{L^2(\Omega)},
$$
where $w_n$ is the optimal $w$ for $u_n$. This gives that $(u_n,w_n)$ is uniformly bounded in $H^1(\Omega)\times \mathbb H^1(\Omega)$ and that there exists a subsequence $\{(\alpha_n,\beta_n,u_n,w_n)\}$ which converges weakly in $\mathbb R^{2}\times H^1(\Omega)\times \mathbb H^1(\Omega)$ to a limit point $(\hat\alpha,\hat\beta,\hat u,\hat w)$. 
Moreover, $u_n\rightarrow \hat u$ strongly in $L^p(\Omega)$ and $w_n\rightarrow \hat w$ in $L^p(\Omega;\mathbb R^n)$. Using the continuity of the $L^2$ fidelity term with respect to strong convergence in $L^2$, and the weak lower semicontinuity of the $H^1$ term with respect to weak convergence in $H^1$ and of the Huber regularised functional even with respect to weak$*$ convergence in $\mathcal M$ (cf. Lemma \ref{lem:huberlsc}) we get
\begin{align*}
& & & \frac12 \norm{\hat u-f}_{L^2(\Omega)}^2
    + \int_\Omega \hat \alpha~|D\hat u-\hat w|_\gamma~ dx +
    \int_\Omega \hat\beta~|Ew|_\gamma~ dx \\
&  & & + \frac{\mu}{2} \left(\norm{\hat u}_{H^1(\Omega)}^2 + \norm{\hat w}_{\mathbb H^1(\Omega)}^2\right)
\\
       & \leq & & \liminf_n \frac12 \norm{u_n-f}_{L^2(\Omega)}^2
        + \int_\Omega \hat\alpha~|Du_n-w_n|_\gamma~ dx +
    \int_\Omega \hat\beta~|Ew_n|_\gamma~ dx 
   \\ & & &  + \frac{\mu}{2} \left(\norm{u_n}_{H^1(\Omega)}^2 + \norm{w_n}_{\mathbb H^1(\Omega)}^2\right)
\\
	 & \leq & &\liminf_n \frac12 \norm{u_n-f}_{L^2(\Omega)}^2
        + \int_\Omega \alpha_n~|Du_n-w_n|_\gamma~ dx +
    \int_\Omega \beta_n~|Ew_n|_\gamma~ dx 
   \\ & & & + \frac{\mu}{2} \left(\norm{u_n}_{H^1(\Omega)}^2 + \norm{w_n}_{\mathbb H^1(\Omega)}^2\right),       
\end{align*}
where in the last step we have used the boundedness of the sequence $R_{\alpha_n,\beta_n}^{\gamma,\mu}(u_n)$ from \eqref{eq:compbound} and the convergence of $(\alpha_n,\beta_n)$ in $\mathbb R^{2}$. This shows that the limit point $\hat u$ is an optimal solution for $(\hat\alpha,\hat\beta)$. Moreover, due to the weak lower semicontinuity of the cost functional $F$ and the fact that the set $\{(\alpha,\beta):~ 0 \leq \alpha \leq \bar\alpha,0 \leq \beta \leq \bar\beta \}$ is closed, we have that $(\hat\alpha,\hat\beta,\hat u)$ is optimal for \eqref{eq:learn}.
\end{proof}

\begin{remark} \label{rem: existence}
\hspace{1cm}
\begin{itemize}
\item Using the existence result in \cite{tuomov-interior}, in principle we could allow infinite values for $\alpha$ and $\beta$. This would include both $\TV^2$ and $\TV$ as possible optimal regularisers in our learning problem. 
\item In \cite{tuomov-interior}, in the case of the $L^2$ cost and assuming that $$R_{\alpha,\beta}^{\gamma}(f)>R_{\alpha,\beta}^{\gamma}(f_0),$$ we moreover show that the parameters $(\alpha,\beta)$ are strictly larger than $0$. In the case of the Huberised TV cost this can only be proven in a discretised setting. Please see \cite{tuomov-interior} for details. 
\item The existence of solutions with $\mu=0$, that is without elliptic regularisation, is also proven in \cite{tuomov-interior}. Note that here, we focus on the $\mu>0$ case since the elliptic regularity is required for proving the existence of Lagrange multipliers in the next section.
\end{itemize}
\end{remark}

\section{Lagrange multipliers} \label{sec:Lagrange}
In this section we prove the existence of Lagrange multipliers for the learning problem \eqref{eq:learn} and derive an optimality system that characterizes its solution. Moreover, a gradient formula for the reduced cost functional is obtained, which plays an important role in the development of fast solution algorithms for the learning problems (see Section \ref{sec:denoising}).

In what follows all proofs are presented for the $\TGV^2$ regularisation case, that is $R_{\alpha,\beta}^{\gamma}=\TGV^{2,\gamma}_{\alpha,\beta}$. However, possible modifications to cope with the ICTV model will also be commented.

We start by investigating the differentiability of the solution operator.
\subsection{Differentiability of the solution operator}\label{sec:diff}
We recall that the $\TGV^2$ denoising problem is given by
\begin{equation*}
u=(v,w)= \argmin_{BV \times BD} \left\{ \frac{1}{2}\int_\Omega |v-f|^2 +  \int_\Omega \alpha |Dv-w|_\gamma + \int_\Omega \beta |E w|_\gamma  \right\}.
\end{equation*}
Using an elliptic regularization we then get
\begin{equation*} \label{eq: denoising problem in diff proof}
u= \argmin_{H^1(\Omega) \times \mathbb H^1(\Omega)} \left\{ a(u,u)+ \frac{1}{2} \int_\Omega |v-f|^2  + \int_\Omega \alpha |Dv-w|_\gamma + \int_\Omega \beta |E w|_\gamma \right\},
\end{equation*}
where $a(u,u)= \mu \left( \|v\|_{H^1}^2 + \|w\|_{\mathbb H^1}^2 \right)$. A necessary and sufficient optimality condition for the latter is then given by the following variational equation
\begin{multline} \label{eq: denoising problem in diff proof var form}
a(u, \Psi)+ \int_\Omega \alpha h_\gamma(Dv-w)(D \phi- \varphi) \,dx\\ + \int_\Omega \beta h_\gamma(E w) E \varphi \,dx +\int_\Omega(v-f)\phi \,dx=0, \text{ for all } \Psi \in U,
\end{multline}
where $\Psi=(\phi,\varphi)$ and $U=H^1(\Omega) \times \mathbb H^1(\Omega)$.

\begin{theorem}
The solution operator $S: \mathbb R^2 \mapsto U$, which assigns to each pair $(\alpha, \beta) \in \mathbb R^{2}$ the corresponding solution to the denoising problem \eqref{eq: denoising problem in diff proof var form}, is Fr\'echet differentiable and its derivative is characterized by the unique solution $z=S'(\alpha, \beta)[\theta_1, \theta_2] \in U$ of the following linearized equation:
\begin{multline} \label{eq: linearized equation}
a(z, \Psi)+ \int_\Omega \theta_1 \ h_\gamma(Dv-w)(D \phi- \varphi) \,dx\\ + \int_\Omega \alpha h'_\gamma(Dv-w)(Dz_1-z_2)(D \phi- \varphi) \,dx + \int_\Omega \theta_2 \ h_\gamma(E w) E \varphi \,dx\\+ \int_\Omega \beta h'_\gamma(E w) E z_2 E \varphi \,dx + \int_\Omega z_1 \phi \,dx=0, \text{ for all } \Psi \in U.
\end{multline} 
\end{theorem}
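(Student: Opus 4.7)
The plan is to apply the implicit function theorem to the optimality condition \eqref{eq: denoising problem in diff proof var form}, viewed as an equation $e(\alpha,\beta,u)=0$ in $U^*$. First I would verify that $e:\mathbb R^2\times U\to U^*$ is continuously Fr\'echet differentiable. The dependence on $(\alpha,\beta)$ is affine and hence trivially $C^1$, and the bilinear form $a$ together with the $L^2$ fidelity contribute smooth pieces. The non-trivial ingredients are the Nemytskii-type terms $u\mapsto h_\gamma(Dv-w)$ and $u\mapsto h_\gamma(Ew)$, where $h_\gamma$ denotes the (weak) gradient of $\huber[\gamma]{\cdot}$. Since $\huber[\gamma]{\cdot}$ is convex and $C^{1,1}$, $h_\gamma$ is globally Lipschitz and its a.e.\ derivative $h'_\gamma$ is bounded in $L^\infty$ with values in the symmetric positive semidefinite matrices.

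Second, I would establish that the partial derivative $e_u(\alpha,\beta,u):U\to U^*$ is a topological isomorphism. Testing the left-hand side of \eqref{eq: linearized equation} against $\Psi=(\phi,\varphi)\in U$ produces the symmetric bilinear form
\[
  b(z,\Psi)=a(z,\Psi)+\int_\Omega \alpha\, h'_\gamma(Dv-w)(Dz_1-z_2)(D\phi-\varphi)\,dx+\int_\Omega \beta\, h'_\gamma(Ew)\,Ez_2\,E\varphi\,dx+\int_\Omega z_1\phi\,dx.
\]
Continuity on $U\times U$ follows from the $L^\infty$-bound on $h'_\gamma$ and Cauchy--Schwarz, while coercivity is inherited from $a(z,z)\ge\mu(\|z_1\|_{H^1}^2+\|z_2\|_{\mathbb H^1}^2)$, the Huber contributions being nonnegative by positive semidefiniteness of $h'_\gamma$. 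Lax--Milgram then yields unique solvability of \eqref{eq: linearized equation} and the isomorphism property of $e_u$. The implicit function theorem now delivers Fr\'echet differentiability of $S$ with derivative $S'(\alpha,\beta)=-e_u^{-1}\,e_{(\alpha,\beta)}$, whose action on $(\theta_1,\theta_2)$ is characterised precisely by \eqref{eq: linearized equation}.

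The main obstacle is the first step: strictly speaking $\huber[\gamma]{\cdot}$ is only $C^{1,1}$, so $h_\gamma$ fails to be classically differentiable on the interface $\{\|g\|_2=1/\gamma\}$, and Fr\'echet differentiability of the Nemytskii operator $g\mapsto h_\gamma(g)$ from $L^2$ into itself is not automatic. I would handle this via a Taylor-type pointwise bound
\[
  h_\gamma(g+\delta)-h_\gamma(g)-h'_\gamma(g)\delta = o(|\delta|),
\]
valid almost everywhere (since $h_\gamma$ is differentiable off a Lebesgue null set), and then lift it to $L^2$ by dominated convergence, the global Lipschitz bound on $h_\gamma$ providing an integrable majorant. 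Lipschitz continuity of $u$ with respect to $(\alpha,\beta)$---itself a straightforward consequence of the coercivity of $b$---ensures that the relevant perturbations $\delta$ are small in $L^2$ whenever the parameter perturbation is small, closing the argument.
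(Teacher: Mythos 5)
Your overall strategy (implicit function theorem plus Lax--Milgram for the linearised operator) differs from the paper's, which argues directly by estimating the second-order quantity $\xi:=u^+-u-z$ and never invokes the IFT; your Lax--Milgram step itself is fine and coincides with the paper's existence argument for \eqref{eq: linearized equation}. The genuine gap is in your treatment of the Nemytskii operator $g\mapsto h_\gamma(g)$. The claim that the pointwise expansion $h_\gamma(g+\delta)-h_\gamma(g)-h'_\gamma(g)\delta=o(|\delta|)$ can be ``lifted to $L^2$ by dominated convergence'' does not give Fr\'echet differentiability from $L^2$ into $L^2$: dominated convergence yields the required smallness only along a fixed sequence of perturbations (i.e.\ Gateaux/directional differentiability), whereas Fr\'echet differentiability demands $\|h_\gamma(g+\delta)-h_\gamma(g)-h'_\gamma(g)\delta\|_{L^2}=o(\|\delta\|_{L^2})$ uniformly over all $\delta$. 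This is the classical two-norm discrepancy: for a non-affine Lipschitz $h_\gamma$ the superposition operator is \emph{not} Fr\'echet differentiable from $L^2$ to $L^2$ (take $\delta$ to be a spike of fixed height supported on a shrinking set; the remainder stays comparable to $\|\delta\|_{L^2}$ because the pointwise $o(|\delta|)$ bound never activates). Consequently the $C^1$ hypothesis of the implicit function theorem fails in the setting you propose, and restricting to the ``relevant'' perturbations $\delta=D\delta_v-\delta_w$ does not rescue the argument, since coercivity of $a$ only makes these small in $L^2$.

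The paper closes exactly this gap with a regularity ingredient your proposal lacks: by Gr\"oger's $W^{1,p}$ regularity theorem for second-order elliptic systems (\cite[Thm.~1,~Rem.~14]{Groeger1989}), combined with the uniform bound $|h_\gamma(\cdot)|\le 1$, it obtains $\left\| u^+-u \right\|_{1,p}\le \tilde L|\theta|$ for a $p>2$. The Taylor remainder of $h_\gamma$ is then $o(\left\| u^+-u \right\|_{1,p})$ in $L^2$ --- the norm gap $p>2$ is precisely what allows the pointwise estimate to be integrated via H\"older and dominated convergence --- and inserting this into the energy estimate for $\xi$ (obtained by testing with $\Psi=\xi$ and using monotonicity of $h_\gamma'$) gives $\|\xi\|_U=o(|\theta|)$. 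If you want to keep the IFT route you would have to pose the equation between spaces in which the superposition operator genuinely is $C^1$, which again forces the $W^{1,p}$ framework together with an invertibility statement for the linearised operator there; the paper's direct estimate avoids this by using Lax--Milgram only in the energy space $U$ and importing the $W^{1,p}$ bound solely for the remainder. A further minor point: differentiability of $h_\gamma$ off a Lebesgue-null subset of $\R^m$ does not imply that your pointwise expansion holds a.e.\ in $\Omega$, because the level set $\{x\in\Omega:|(Dv-w)(x)|=1/\gamma\}$ need not be negligible; there the one-sided derivatives of $h_\gamma$ disagree and the remainder is genuinely of order $|\delta|$.
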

\begin{proof}
Thanks to the ellipticity of $a(\cdot, \cdot)$ and the monotonicity of $h_\gamma$, existence of a unique solution to the linearized equation follows from the Lax-Milgram theorem.

Let $\xi:=u^+- u -z$, where $u=S(\alpha, \beta)$ and $u^+=S(\alpha+\theta_1, \beta+ \theta_2)$. Our aim is to prove that
$\| \xi \|_U= o(|\theta|).$ Combining the equations for $u^+$, $u$ and $z$ we get that 
\begin{multline*}
a(\xi, \Psi)+ \int_\Omega (\alpha +\theta_1) \ h_\gamma(Dv^+-w^+)(D \phi- \varphi) \,dx- \int_\Omega \alpha \ h_\gamma(Dv-w)(D \phi- \varphi) \,dx\\ 
- \int_\Omega \theta_1 \ h_\gamma(Dv-w)(D \phi- \varphi) \,dx
- \int_\Omega \alpha h'_\gamma(Dv-w)(Dz_1-z_2)(D \phi- \varphi) \,dx\\ 
+ \int_\Omega (\beta+\theta_2) h_\gamma(E w^+) E \varphi \,dx- \int_\Omega \beta h_\gamma(E w) E \varphi \,dx\\ 
- \int_\Omega \theta_2 \ h_\gamma(E w) E \varphi \,dx
- \int_\Omega \beta \ h'_\gamma(E w) E z_2 E \varphi \,dx +2 \int_\Omega \xi_1 \phi \,dx=0, \text{ for all } \Psi \in U,
\end{multline*}
where $\xi:=(\xi_1,\xi_2) \in H^1(\Omega) \times \mathbb H^1(\Omega)$. Adding and subtracting the terms $$\int_\Omega \alpha h'_\gamma(Dv-w)(D \delta_v - \delta_w)(D \phi- \varphi) \,dx$$ and $$\int_\Omega \beta h'_\gamma(E w)E \delta_w: E \varphi \,dx,$$ where $\delta_v:=v_{\alpha+\theta}-v$ and $\delta_w:=w_{\alpha+\theta}-w$, we obtain that
\begin{multline*}
a(\xi, \Psi)+ \int_\Omega \alpha h'_\gamma(Dv-w)(D \xi_1- \xi_2)(D \phi- \varphi)\\+ \int_\Omega \beta h'_\gamma(E w) E \xi_2 :E \varphi \,dx +2 \int_\Omega \xi_1 \phi \,dx\\
=- \int_\Omega \alpha \left[ h_\gamma(Dv^+-w^+) -h_\gamma(Dv-w)- h'_\gamma(Dv-w)(D \delta_v - \delta_w)\right] (D \phi- \varphi)\\ 
- \int_\Omega \theta_1 \ \left[ h_\gamma(Dv^+-w^+)-h_\gamma(Dv-w) \right] (D \phi- \varphi) \,dx\\
- \int_\Omega \beta \left[h_\gamma(E w^+)-h_\gamma(E w)- h'_\gamma(E w) E \delta_w \right] :E \varphi \,dx\\ 
- \int_\Omega \theta_2 \ \left[ h_\gamma(E w_{\alpha+\theta})-h_\gamma(E w) \right]: E \varphi \,dx, \text{ for all } \Psi \in U.
\end{multline*}
Testing with $\Psi=\xi$ and using the monotonicity of $h_\gamma'(\cdot)$ we get that
\begin{multline*} 
\|\xi\|_U
\leq C \left\{ |\alpha| \left\| h_\gamma(Dv^+-w^+)-h_\gamma(Dv-w)- h'_\gamma(Dv-w)(D \delta_v - \delta_w) \right\|_{L^2} \right.\\ 
+|\theta_1| \left\| h_\gamma(Dv^+-w^+)-h_\gamma(Dv-w) \right\|_{L^2}\\
\left. +|\beta| \left\| h_\gamma(E w^+)-h_\gamma(E w)- h'_\gamma(E w) E \delta_w \right \|_{L^2} \right.\\ 
\left. +|\theta_2| \left\| h_\gamma(E w_{\alpha+\theta})-h_\gamma(E w) \right\|_{L^2} \right\},
\end{multline*}
for some generic constant $C >0$. Considering the differentiability and Lipschitz continuity of $h_\gamma'(\cdot)$, it then follows that 
\begin{multline} \label{eq: differentiability estimate}
\|\xi\|_U
\leq C \left( |\alpha|~ o (\left\| u^+-u \right\|_{1,p}) 
+|\theta_1| \left\| u_{\alpha+\theta}-u \right \|_{U} \right.\\
+\left. |\beta|~ o (\left\| w^+-w \right\|_{1,p}) 
+|\theta_2| \left\| w_{\alpha+\theta}-w \right \|_{\mathbb H^1(\Omega)} \right),
\end{multline}
where $\| \cdot \|_{1,p}$ stands for the norm in the space $\mathbb W^{1,p}(\Omega)$. From regularity results for second order systems (see \cite[Thm.~1,~Rem.~14]{Groeger1989}), it follows that
\begin{align*}
\left\| u^+-u \right\|_{1,p} & \leq L |\theta| \left( \|\mathrm{Div}~ h_\gamma(Dv -w)\|_{-1,p}+ \|h_\gamma(Dv -w)\|_{-1,p}+\|\mathrm{Div}~ h_\gamma(E w)\|_{-1,p} \right)\\
& \leq L |\theta| \left(2 \|h_\gamma(Dv -w)\|_{L^\infty}+\|h_\gamma(E w)\|_{L^\infty} \right)\\ & \leq \tilde L |\theta|,
\end{align*}
since $|h_\gamma (\cdot)| \leq 1$. Inserting the latter in estimate \eqref{eq: differentiability estimate}, we finally get that $$\| \xi \|_U= o(|\theta|).$$
\end{proof}

\begin{remark}
The Fr\'echet differentiability proof makes use of the quasilinear structure of the $\TGV^2$ variational form, making it difficult to extend to the ICTV model without further regularisation terms. For the latter, however, a Gateaux differentiability result may be obtained using the same proof technique as in \cite{de2013image}.
\end{remark}

\subsection{The adjoint equation}\label{sec:optsys}

Next, we use the Lagrangian formalism for deriving the adjoint equations for both the $\TGV^2$ and ICTV learning problems. Existence of a solution to the adjoint equation then follows from the well-posedness of the linearized equation.

Defining the Lagrangian associated to $\TGV^2$ learning problem by:
\begin{multline*}
\mathcal L(v,w,\alpha,\beta,p_1,p_2) = F(u) +\mu (v, p_1)_{H^1}+ \mu (w, p_2)_{\mathbb H^1}\\ + \int_\Omega \alpha h_\gamma (Dv - w)(D p_1- p_2)+ \int_\Omega \beta h_\gamma(E w) E p_2 + \int_\Omega (v-f) p_1,
\end{multline*} 
and taking the derivative with respect to the state variable $(v,w)$, we get the necessary optimality condition 
\begin{multline*}
\mathcal L_{(u,v)}'(u,v,\alpha,\beta,p_1,p_2)[(\delta_v, \delta_w)]= F'(u)\delta_u +\mu (p_1, \delta_v)_{H^1}+ \mu (p_2, \delta_w)_{\mathbb H^1}\\ + \int_\Omega \alpha h_\gamma' (Dv - w)(D \delta_v- \delta_w)(D p_1- p_2)\\+ \int_\Omega \beta h_\gamma' (E w) E \delta_w E p_2 + \int_\Omega p_1 \delta_v=0.
\end{multline*}
If $\delta_w=0$, then
\begin{equation*}
\mu (p_1, \delta_v)_{H^1}+ \int_\Omega \alpha h_\gamma' (Dv - w)(D p_1- p_2) D \delta_v+ \int_\Omega p_1 \delta_v=-\nabla_vF(u)\delta_v, 
\end{equation*}
whereas if $\delta_v=0$, then
\begin{multline*}
\mu (p_2, \delta_w)_{\mathbb H^1} - \int_\Omega \alpha h_\gamma' (Dv - w)(D p_1- p_2) \delta_w\\+ \int_\Omega \beta h_\gamma' (E w) \ E p_2 \ E \delta_w =-\nabla_w F(u)\delta_w.
\end{multline*}


Existence of a unique solution then follows from the transposition method, since the linearised equation is well-posed.

\begin{remark}
For the ICTV model it is possible to proceed formally with the Lagrangian approach. We recall that a necessary and sufficient optimality condition for the ICTV functional is given by
\begin{multline}
\mu (u, \phi)_{H^1}+ \mu (\nabla v, \nabla \varphi)_{\mathbb H^1} + \int_\Omega \alpha h_\gamma (Du - \nabla v)(D \phi- \nabla \varphi)\\+ \int_\Omega \beta h_\gamma(D \nabla v) D \nabla \varphi + \int_\Omega (u-f)\phi=0, \text{ for all }(\phi, \varphi) \in H^1(\Omega) \times \mathbb H^1(\Omega)
\end{multline}
and the correspondent Lagrangian functional $\mathcal L$ is given by
\begin{multline*}
\mathcal L(u,v,\alpha,\beta,p_1,p_2) = F(u) +\mu (u, p_1)_{H^1}+ \mu (\nabla v, \nabla p_2)_{\mathbb H^1}\\ + \int_\Omega \alpha h_\gamma (Du - \nabla v)(D p_1- \nabla p_2)+ \int_\Omega \beta h_\gamma(D \nabla v) D \nabla p_2 + \int_\Omega (u-f) p_1.
\end{multline*}
Deriving the Lagrangian with respect to the state variable $(u,v)$ and setting it equal to zero yields
\begin{multline*}
\mathcal L_{(u,v)}'(u,v,\alpha,\beta,p_1,p_2)[(\delta_u, \delta_v)]= F'(u)\delta_u +\mu (p_1, \delta_u)_{H^1}+ \mu (\nabla p_2, \nabla \delta_v)_{\mathbb H^1}\\ + \int_\Omega \alpha h_\gamma' (Du - \nabla v)(D \delta_u- \nabla \delta_v)(D p_1- \nabla p_2)\\+ \int_\Omega \beta h_\gamma' (D \nabla v) D \nabla \delta_v D \nabla p_2 + \int_\Omega p_1 \delta_u=0.
\end{multline*}
By taking succesively $\delta_v=0$ and $\delta_u=0$, the following system is obtained
\begin{subequations}
\begin{multline}
\mu (p_1, \delta_u)_{H^1}+ \int_\Omega \alpha h_\gamma' (Du - \nabla v)(D p_1- \nabla p_2) D \delta_u+ \int_\Omega p_1 \delta_u=-F'(u)\delta_u.
\end{multline}
\begin{multline}
\mu (\nabla p_2, \nabla \delta_v)_{\mathbb H^1} + \int_\Omega \alpha h_\gamma' (Du - \nabla v)(D p_1- \nabla p_2) \nabla \delta_v\\+ \int_\Omega \beta h_\gamma' (D \nabla v) D \nabla p_2 D \nabla \delta_v =0.
\end{multline}
\end{subequations}
\end{remark}

\subsection{Optimality condition}
Using the differentiability of the solution operator and the well-posedness of the adjoint equation, we derive next an optimality system for the characterization of local minima of the bilevel learning problem. Besides the optimality condition itself, a gradient formula arises as byproduct, which is of importance in the design of solution algorithms for the learning problems.

\begin{theorem} \label{thm: optimality system}
Let $(\bar \alpha, \bar \beta) \in \mathbb R^2_+$ be a local optimal solution for problem \eqref{eq:learn}. Then there exist Lagrange multipliers $\Pi \in U$ and $\lambda_1, \lambda_2 \in L^2(\Omega)$ such that the following system holds:
\begin{subequations}
\begin{multline}
a(u, \Psi)+\alpha \int_\Omega h_\gamma(Dv-w)(D \phi- \varphi) \,dx\\ + \beta \int_\Omega h_\gamma(E w) E \varphi \,dx +2 \int_\Omega(v-f)\phi \,dx=0, \text{ for all } \Psi \in H^1(\Omega) \times \mathbb H^1(\Omega),
\end{multline}
\begin{multline} \label{eq: adjoint equation TGV}
a(\Pi, \Psi)+\alpha \int_\Omega h_\gamma' (Dv-w)(D p_1-p_2)(D \phi- \varphi) \,dx\\ + \beta \int_\Omega h_\gamma' (E w) \ E p_2 \ E \varphi \,dx +2 \int_\Omega p_1 \phi \,dx=-F_u(u)[\Psi], \text{ for all } \Psi \in H^1(\Omega) \times \mathbb H^1(\Omega),
\end{multline}
\begin{equation}
\lambda_1= \int_\Omega h_\gamma (Dv-w)(D p_1 -p_2)
\end{equation}
\begin{equation}
\lambda_2= \int_\Omega h_\gamma (Ew) \ E p_2
\end{equation}
\begin{equation}
\lambda_1 \geq 0, \qquad \lambda_2 \geq 0
\end{equation}
\begin{equation}
\lambda_1 \cdot \bar \alpha = \lambda_2 \cdot \bar \beta=0.
\end{equation}
\end{subequations}
\end{theorem}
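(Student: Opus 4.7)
The plan is to eliminate the state via the solution operator $S\colon(\alpha,\beta)\mapsto u=(v,w)$ established in Section~\ref{sec:diff}, reducing \eqref{eq:learn} to the finite-dimensional box-constrained problem $\min_{(\alpha,\beta)\in\R_+^2}\mathcal{J}(\alpha,\beta)$ with $\mathcal{J}(\alpha,\beta)\defeq F(S(\alpha,\beta))$. The state equation in the optimality system is then just the variational characterisation of $u=S(\bar\alpha,\bar\beta)$, which is automatic. What remains is to construct an adjoint state $\Pi=(p_1,p_2)$ solving \eqref{eq: adjoint equation TGV}, to express $\nabla\mathcal{J}(\bar\alpha,\bar\beta)$ in terms of $\Pi$, and to translate the first-order necessary condition on $\R_+^2$ into the KKT system.

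For the adjoint equation I would argue well-posedness by the transposition method: the bilinear form appearing in \eqref{eq: adjoint equation TGV} is the transpose of the one in the linearised equation \eqref{eq: linearized equation}, which is coercive on $U$ thanks to the $\mu$-ellipticity of $a(\cdot,\cdot)$ together with the monotonicity of $h_\gamma'$. For both cost functionals of interest, $F_u(u)$ is a bounded linear form on $U$, so Lax--Milgram applied to the transposed bilinear form yields a unique $\Pi\in U$.

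Having fixed $\Pi$, I would test \eqref{eq: linearized equation} with $\Psi=\Pi$ and test the adjoint equation with $\Psi=z\defeq S'(\bar\alpha,\bar\beta)[\theta_1,\theta_2]$, and subtract. The symmetric bilinear parts cancel, and using $F_u(u)[z]=\mathcal{J}'(\bar\alpha,\bar\beta)[\theta_1,\theta_2]$ from the chain rule one reads off the gradient formula
\[
    \mathcal{J}'(\bar\alpha,\bar\beta)[\theta_1,\theta_2]
    = \theta_1\int_\Omega h_\gamma(Dv-w)(Dp_1-p_2)\,dx
    + \theta_2\int_\Omega h_\gamma(Ew)\,Ep_2\,dx,
\]
so that the integrals appearing in the theorem statement are exactly $\partial_\alpha\mathcal{J}$ and $\partial_\beta\mathcal{J}$ at $(\bar\alpha,\bar\beta)$.

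Finally, since the feasible set $\R_+^2$ is a convex polyhedron and $(\bar\alpha,\bar\beta)$ is a local minimum of $\mathcal{J}$, the standard KKT conditions for the positive orthant read $\partial_\alpha\mathcal{J}\ge 0$, $\partial_\beta\mathcal{J}\ge 0$ together with $\partial_\alpha\mathcal{J}\cdot\bar\alpha=\partial_\beta\mathcal{J}\cdot\bar\beta=0$; defining $\lambda_1$ and $\lambda_2$ to be these partial derivatives produces the announced complementarity system. The main obstacle, to my mind, is the rigorous passage from Fr\'echet differentiability of $S$ to the adjoint-based gradient formula: one must control the difference-quotient remainders uniformly, which relies on the Lipschitz continuity of $h_\gamma'$ and the $H^1$-bounds on $u$ and $w$ afforded by the elliptic regularisation. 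Once that step is in place, the KKT part is a routine two-variable observation.
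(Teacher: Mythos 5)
Your proposal is correct and follows essentially the same route as the paper: reduction to the finite-dimensional problem $\min_{(\alpha,\beta)\in\R_+^2}\mathcal F(\alpha,\beta)$ via the solution operator, well-posedness of the adjoint by transposition from the coercive linearised equation, the test-and-subtract duality argument yielding the gradient formula, and identification of the multipliers with the partial derivatives under the sign and complementarity conditions. The only cosmetic difference is that the paper invokes the Zowe--Kurcyusz theorem for the KKT step where you use the elementary finite-dimensional statement for the positive orthant; these coincide here.
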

\begin{proof}
Consider the reduced cost functional $\mathcal F(\alpha, \beta)=F(u(\alpha, \beta)).$  The bilevel optimization problem can then be formulated as
\begin{align*}
& \min_{(\alpha, \beta) \in C}  \mathcal F(\alpha, \beta),
\end{align*}
where $\mathcal F: \mathbb R^{2} \to \mathbb R$ and $C$ corresponds to the positive orthant in $\mathbb R^2$. From \cite[Thm.~3.1]{ZoweK1979}, there exist multipliers $\lambda_1, \lambda_2$ such that
\begin{align*}
\lambda_1= \nabla_\alpha \mathcal F(\bar \alpha, \bar \beta)\\
\lambda_2= \nabla_\beta \mathcal F(\bar \alpha, \bar \beta)\\
\lambda_1 \geq 0, \quad \lambda_2 \geq 0\\
\lambda_1 \cdot \bar \alpha = \lambda_2 \cdot \bar \beta=0,
\end{align*}

By taking the derivative with respect to $(\alpha, \beta)$ and denoting by $u'$ the solution to the linearized equation \eqref{eq: linearized equation} we get, together with the adjoint equation \eqref{eq: adjoint equation TGV}, that 
\begin{align*}
\mathcal F'(\alpha, \beta)[\phi]& =F_u(u)u'(\alpha,\beta)[\phi]\\
&= -a(\Pi,u')- \alpha \int_\Omega h_\gamma'(Dv-w)(D p_1 -p_2 )(D v'-w')\\& \qquad -\beta \int_\Omega h_\gamma'(E w) E p_2 \ E w'-2 \int_\Omega p_1 v'\\
&= -a(u',\Pi) - \alpha \int_\Omega h_\gamma'(Dv-w)(D v'-w') (D p_1 -p_2 )\\& \qquad -\beta \int_\Omega h_\gamma'(E w) E w' \ E p_2-2 \int_\Omega v' p_1
\end{align*}
which, taking into account the linearized equation, yields
\begin{equation} \label{eq: derivative of reduced cost}
\mathcal F'(\alpha, \beta)[\phi]=\phi_1 \int_\Omega h_\gamma(Dv-w)(D p_1 -p_2 )+ \phi_2 \int_\Omega h_\gamma(E w)E p_2.
\end{equation}
Altogether we proved the result.
\end{proof}

\begin{remark}
From the existence result (see Remark \ref{rem: existence}), we actually know that, under some assumptions, $\bar \alpha$ and $\bar \beta$ are strictly greater than zero. This implies that the multipliers $\lambda_1=\lambda_2=0$ and the problem is of unconstrained nature. This plays an important role in the design of solution algorithms, since only a mild treatment of the constraints has to be taken into account, as will be showed in Section 6.
\end{remark}

\section{Numerical algorithms}
\label{sec:denoising}
In this section we propose a second order quasi-Newton method for the solution of the learning problem with scalar regularisation parameters. The algorithm is based on a BFGS update, preserving the positivity of the iterates through the line search strategy and updating the matrix cyclically depending on the satisfaction of the curvature condition. For the solution of the lower level problem, a semismooth Newton method with a properly modified Jacobi matrix is considered. Moreover, warm initialisation strategies have to be taken into account in order to get convergence for the $\TGV^2$ problem.
The developed algorithm is also extended to a simple linear polynomial case. 


\subsection{BFGS algorithm}\label{sec:bfgs}
Thanks to the gradient characterization obtained in Theorem \ref{thm: optimality system}, we next devise a BFGS algorithm to solve the bilevel learning problems. We employ a few technical tricks to ensure convergence of the classical method. In particular,
for numerical stability we need to avoid the boundary of the constraint set on the parameters, 
so we pick $0 < \theta < \Theta$, considered numerically almost zero or infinity, respectively,
and require the box constraints
\begin{equation}
    \label{eq:bfgs-constr}
    \theta \le \alpha, \beta \le \Theta.
\end{equation}
We also limit the step length to get at most a fraction closer to the boundary. 
As we show in \cite{tuomov-interior} the solution is in the interior for the regularisation 
and cost functionals we are interested in. Below this limit, we use Armijo line search. 

Moreover, the good behaviour of the BFGS method depends upon the
BFGS matrix staying positive definite. This would be ensured by the Wolfe conditions, but because 
of our step length limitation,
the curvature condition is not necessarily satisfied. (The Wolfe conditions are guaranteed
to be satisfied for some step length $\sigma$, if our domain is unbounded, but the range 
where the step satisfies the criterion, may be beyond our maximum step length, and
is not necessarily satisfied closer to the current point.)
Instead we skip the BFGS update if the curvature is negative. 

Overall our learning algorithm may be written as follows.

\begin{algorithm}[BFGS for denoising parameter learning]
    \label{algorithm:bfgs-learn}
    Pick Armijo line search constant $c$, and target residual $\rho$. 
    Pick initial iterate $(\alpha^0,\beta^0)$.
    Solve the denoising problem \eqref{eq:denoise-huber1}
    for $(\alpha,\beta)=(\alpha^0,\beta^0)$, yielding $u^0$.
    Initialise $B^1=I$.
    Set $i \defeq 0$, and  iterate the following steps:
    \begin{enumerate}
        \item \label{step:adjoint}
            Solve the adjoint equation \eqref{eq: adjoint equation TGV} for $\Pi^i$,
            and calculate $\grad \mathcal F (\alpha^i,\beta^i)$ from \eqref{eq: derivative of reduced cost}.
        \item If $i \ge 2$ do the following:
            \begin{enumerate}
                \item Set $s \defeq (\alpha^i,\beta^i)-(\alpha^{i-1}, \beta^{i-1})$,
                    and $r \defeq \grad \mathcal F(\alpha^i,\beta^i)-\grad \mathcal F(\alpha^{i-1},\beta^{i-1})$.
                \item
                    Perform the BFGS update
                    \[
                        B^i \defeq \begin{cases}
                            B^{i-1}, 
                            & s^Tr < 0, \\
                            B^{i-1} - \frac{B^{i-1} s \otimes B^{i-1}s}{t^T B^{i-1} s} + \frac{r \otimes r}{s^Tr}
                            & s^Tr \ge 0.
                            \end{cases}
                    \]
            \end{enumerate}
        \item Compute $\delta_{\alpha, \beta}$ from
            \[
                B^i \delta_{\alpha, \beta} = g^i.
            \]
        \item Initialise
            $\sigma \defeq \min\{1, \sigma_{\max}/2\}$, where
            \[
                \sigma_{\max} \defeq \max \{ \sigma \ge 0 \mid (\alpha^i, \beta^i)+\sigma \delta_{\alpha, \beta} \text{ satisfies } \eqref{eq:bfgs-constr}\}.
            \]
            Repeat the following:
            \begin{enumerate}
                \item 
                    \label{item:linesearch-1}
                    Let $(\alpha_\sigma, \beta_\sigma) \defeq (\alpha^i, \beta^i)+\sigma \delta_{\alpha, \beta}$, 
                    and solve the denoising problem \eqref{eq:denoise-huber1}
                    for $(\alpha, \beta)=(\alpha_\sigma, \beta_\sigma)$, yielding $u_\sigma$.
                \item If the residual $\norm{(\alpha_\sigma, \beta_\sigma) - (\alpha^i, \beta^i)}/\norm{(\alpha_\sigma, \beta_\sigma)} < \rho$ do the following:
                    \begin{enumerate}
                        \item If $\min_\sigma \mathcal F(\alpha_\sigma, \beta_\sigma) < \mathcal F(\alpha^i, \beta^i)$
                        over all $\sigma$ tried, choose $\sigma^*$ the minimiser, 
                        set $(\alpha^{i+1}, \beta^{i+1}) \defeq (\alpha_{\sigma^*}, \beta_{\sigma^*})$, $u^{i+1} \defeq u_{\sigma^*}$, and
                        continue from Step \ref{step:res-check}
                        \item Otherwise end the algorithm with solution $(\alpha^*, \beta^*) \defeq (\alpha^i, \beta^i)$.
                    \end{enumerate}
                \item Otherwise, if Armijo condition $\mathcal F(\alpha_\sigma, \beta_\sigma) \le \mathcal F(\alpha^i, \beta^i) + \sigma c \grad \mathcal F(\alpha^i,\beta^i)^T \delta_{\alpha, \beta}$
                    holds, set $(\alpha^{i+1}, \beta^{i+1}) \defeq (\alpha_{\sigma}, \beta_{\sigma})$,
                    $u^{i+1} \defeq u_{\sigma}$, and continue from Step \ref{step:res-check}. 
                \item In all other cases, set $\sigma \defeq \sigma/2$ 
                    and continue from Step \ref{item:linesearch-1}.
            \end{enumerate}
        \item \label{step:res-check}
            If the residual $\norm{(\alpha^{i+1}, \beta^{i+1}) - (\alpha^i, \beta^i)}/\norm{(\alpha^{i+1}, \beta^{i+1})} < \rho$, end
            the algorithm with $(\alpha^* , \beta^*) \defeq (\alpha^{i+1}, \beta^{i+1})$.
            Otherwise continue from Step \ref{step:adjoint} with $i \defeq i+1$.
    \end{enumerate}
\end{algorithm}
Step (4) ensures that the iterates remain feasible, without making use of a projection step. This is justified since it's been analytically proved that the optimal parameters are greater than zero (see \cite{tuomov-interior}).

\subsection{An infeasible semi-smooth Newton method}\label{sec:seminewton}
In variational form, the $\TGV^2$ denoising problem can be written as
\begin{equation*}
\mu \int_\Omega (Dv \cdot D \phi+ v \phi)+ \int_\Omega \alpha h_\gamma (Dv - w) D \phi + \int_\Omega (v-f) \phi =0, \quad \forall \phi \in H^1(\Omega)
\end{equation*}
\begin{multline*}
\mu \int_\Omega (Ew : E \varphi+ w \varphi) - \int_\Omega \alpha h_\gamma (Dv - w) D \varphi \\+ \int_\Omega \beta h_\gamma (E w) \ E \varphi =0, \quad \forall \varphi \in \mathbb H^1(\Omega)
\end{multline*}
or, in general abstract primal-dual form, as 
\begin{subequations}
\label{eq:ssn-sys}
\begin{gather}
    \label{eq:ssn-sys1}
    L u +  \sum_{i=1}^{\NA} A_j^* q_j = f \quad \text{ in } \Omega
    \\
    \label{eq:ssn-sys2}
    \max\{1/\gamma, \abs{[A_j u](x)}_2\} q_j(x) - \alpha_j [A_j u](x) = 0
    \text{ a.e. in }\Omega,
    \quad
    j=1,\ldots,\NA.
\end{gather}
\end{subequations}
where $L \in \mathcal L (\SPACEuHdual,\SPACEuHdual')$ is a second order linear elliptic operator, $A_j, ~j=1, \dots, N$, are linear operators acting on $u$ and $q_j(x), ~j=1, \dots, N$, correspond to the dual multipliers. 

Let us set
\[
    \maxO_j(u) \defeq \max\{1/\gamma, \abs{[A_j u](x)}_2\}.
\]
Let us also define the diagonal application 
$\DiagO(u): \SPACEuLtwo \to \SPACEuLtwo$ by
\[
    [\DiagO(u) q](x) = u(x) q(x),
    \quad
    (x \in \Omega)
\]
We may derive $\grad_u [\DiagO(\maxO_j(u)) q_j]$ being defined by
\[
    \grad_u [\DiagO(\maxO_j(u)) p_j]
    =
    A_j^* \DiagO(q_j) \nO(A_j u)
    \quad
    \text{where}
    \quad
    \nO(z)
    \defeq
    \begin{cases}
        0, & \abs{z(x)}_2 < 1/\gamma \\
        \frac{z(x)}
             {\abs{z(x)}_2}, & \abs{z(x)}_2 \geq 1/\gamma. \\
    \end{cases}
\]
Then \eqref{eq:ssn-sys1}, \eqref{eq:ssn-sys2} may be written as
\begin{gather}
    \notag
    L u + \sum_{i=1}^{\NA} A_j^* q_j = f \quad \text{ in } \Omega
    \\
    \notag
    \DiagO(\maxO_j(u)) q_j - \alpha_j A_j u = 0,
    \quad
    \text{ a.e. in }
    \Omega,
    \quad
    (j=1,\ldots,\NA).
\end{gather}
Linearising, we obtain the system
\begin{equation}
    \label{eq:ssn}
    \tag{SSN-1}
    \begin{pmatrix}
        L & A_1^* & \ldots & A_\NA^* \\
        - \alpha_1 A_1 + \nO(A_1 u) \DiagO(q_1) A_1 & \DiagO(\maxO_j(u)) & 0 & 0 \\
        \vdots & 0 & \ddots & 0 \\
        - \alpha_\NA A_\NA + \nO(A_\NA u) \DiagO(q_\NA) A_\NA & 0 & 0 & \DiagO(\maxO_\NA(u)) \\
    \end{pmatrix}
    \begin{pmatrix}
        \Step u \\
        \Step q_1 \\
        \vdots \\
        \Step q_\NA \\
    \end{pmatrix}
    =
    R
\end{equation}
where
\[
    R \defeq
    \begin{pmatrix}
        -L u - \sum_{i=1}^{\NA} A_j^* q_j + f \\
        \alpha_1 A_1 u - \DiagO(\maxO_1(u)) q_1\\
        \vdots \\
        \alpha_\NA A_\NA u - \DiagO(\maxO_\NA(u)) q_\NA
    \end{pmatrix}.
\]
The semi-smooth Newton method solves \eqref{eq:ssn} at a current iterate $(u^i, q_1^i, \ldots q_\NA^i)$.
It then updates
\begin{equation}
    \tag{SSN-2}
    \label{eq:ssn-step}
    (u^{i+1}, \tilde q_1^{i+1}, \ldots \tilde q^{i+1}_\NA) := (u^i + \tau \Step u, q_1^i + \tau \Step q_1, q_\NA^i + \tau \Step q_\NA), 
\end{equation}
for a suitable step length $\tau$, allowing $\tilde q^{i+1}$ to become infeasible in the
process. That is, it may hold that $\abs{\tilde q_j^{i+1}(x)}_2 > \alpha_j$, which may lead to non-descent directions. In order to globalize the method, one projects
\begin{equation}
    \label{eq:ssn-proj}
    \tag{SSN-3}
    q_j^{i+1} \defeq \projO(\tilde q_j^{i+1}; \alpha_j),
    \quad
    \text{where}
    \quad
    \projO(q, \alpha)(x) \defeq \sign(q(x)) \min\{\alpha, \abs{q(x)}\},
\end{equation}
in the building of the Jacobi matrix. Following \cite{hintermuller2006infeasible,sun1997newton}, it can be shown that a discrete version of
the method \eqref{eq:ssn}--\eqref{eq:ssn-proj} converges globally and locally 
superlinearly near a point where the subdifferentials of the
operator on $(u, q_1, \ldots q_\NA)$ corresponding \eqref{eq:ssn-sys}
are non-singular. Further dampening as in \cite{hintermuller2006infeasible}
guarantees local superlinear convergence at any point.
We do not represent the proof, as going into the discretisation and
dampening details would expand this work considerably.

\begin{remark}
    \label{rem:ssn-simplify}
    The system \eqref{eq:ssn} can be simplified, which is crucial to obtain
    acceptable performance with $\TGV^2$.
    Indeed observe that $B$ is invertible, so we may solve $\Step u$
    from
    \begin{equation}
        \label{eq:delta-u-upd}
        B \Step u  = R_1 - \sum_{j=1}^N A_j^* \Step q_j.
    \end{equation}
    Thus we may simplify $\Step u$ out of \eqref{eq:ssn}, 
    and only solve for $\Step q_1, \ldots, \Step q_N$
    using a reduced system matrix.
    Finally we calculate $\Step u$ from \eqref{eq:delta-u-upd}.
\end{remark}


For the denoising sub-problem \eqref{eq:denoise-huber1} we use the method
\eqref{eq:ssn}--\eqref{eq:ssn-proj} with the reduced system matrix
of Remark \ref{rem:ssn-simplify}. Here, we denote by $z$ in the case of TGV$^2$ the parameters 
$$
z=(v,w),
$$ 
and in the case of ICTV 
$$
z=(u,v).
$$ 
For the calculation of the step length
$\tau$, we use Armijo line search with parameter $c=1\ee^{-4}$.
We end the SSN iterations when
\[
    \tau\frac{\norm{\Step y^i}}{\max\{1,\norm{y^i}\}} \le 1\ee^{-5},
\]
where $\Step y^i=(\Step z^i,\Step q_1^i, \ldots, \Step q_N^i)$,
and $y^i=(z^i, q_1^i, \ldots,  q_N^i)$.

\newcommand{
    \begin{subfigure}[t]{0.45\textwidth}%
    \resplotxx[]{resimg/}
    \input{resimg/-vals.tex}%
    \caption{\methodnamex{}}
    \ifdefined\subfigprefix\label{\subfigprefix:}\else\relax\fi 
    \end{subfigure}
    }[4][]{
    \begin{subfigure}[t]{0.45\textwidth}%
    \resplotxx[#1]{resimg/#2}
    \input{resimg/#2-vals.tex}%
    \caption{\methodnamex{#4}}
    \ifdefined\subfigprefix\label{\subfigprefix:#2}\else\relax\fi 
    \end{subfigure}
    }

\newcommand{\restablc}[5][]{%
   \input{resimg/#2-vals.tex}%
   \methodnamex{#4} &  
   \mathname{#3} & 
   \RESssim & 
   \RESpsnr &
   \RESiters &
   $\REStime$s &
   \def\arg{#5}%
   \def\nofig{}%
   \ifx\arg\nofig\else\ref{#5}(\subref{#5:#2})\fi \\%
}

\subsection{Warm initialisation}\label{sec:numdisc}

In our numerical experimentation we generally found 
Algorithm \ref{algorithm:bfgs-learn} to perform well for learning the regularisation
parameter for $\TV$ denoising as was done in \cite{de2013image}. For learning the two (or even more) regularisation parameters for
$\TGV^2$ denoising, we found that a warm initialisation is needed to obtain convergence. More specifically, we use $\TV$
as an aid for discovering both the initial iterate $(\alpha^0,\beta^0)$ as well as the initial
BFGS matrix $B^1$. This is outlined in the following algorithm.

\begin{algorithm}[BFGS initialisation for $\TGV^2$ parameter learning]
    \label{algorithm:bfgs-learn-tgv2}
    Pick a heuristic factor $\delta_0 > 0$. Then do the following:
    \begin{enumerate}
        \item Solve the corresponding problem for $\TV$ using Algorithm \ref{algorithm:bfgs-learn}.
            This yields optimal $\TV$ denoising parameter $\alpha_\TV^*$, as well as the 
            BFGS estimate $B_\TV$ for $\grad^2 \mathcal F (\alpha_\TV^*)$.
        \item Run Algorithm \ref{algorithm:bfgs-learn} for $\TGV^2$ with initialisation
            $(\alpha^0,\beta^0) \defeq (\alpha_\TV^* \delta_0, \alpha_\TV^*)$,
            and initial BFGS matrix $B^1 \defeq \mathrm{diag}(B_\TV \delta_0, B_\TV)$.
    \end{enumerate}
\end{algorithm}

With $\Omega=(0, 1)^2$, we pick $\delta_0=1/\pixels$, where the original discrete image has $\pixels \times \pixels$ pixels. This corresponds to the heuristic \cite{tuomov-dtireg,tuomov-phaserec} that if $\pixels \approx 128$ or $256$ and the discrete image is mapped into 
the corresponding domain $\Omega=(0, \pixels)^2$ directly (corresponding to spatial step size of one in  the discrete gradient operator), then  $\beta \in (\alpha, 1.5 \alpha)$ tends to be a good choice. We will later verify this through the use of our algorithms. 
Now, if $f \in \BVspace((0, \pixels)^2)$ is rescaled to $\BVspace((0, 1)^2)$, i.e. $\tilde f(x) \defeq f(x/\pixels)$, then with $\tilde u(x) \defeq u(x/\pixels)$ and $\tilde w(x) \defeq w(x/\pixels)/\pixels$, we have
\begin{multline}
    \frac{1}{2}\norm{f-u}_{L^2((0, \pixels)^2)}^2
    +\alpha\norm{Du-w}_{\Meas((0, \pixels)^2; \R^2)}
    +\beta\norm{Ew}_{\Meas((0, \pixels)^2; \R^{2 \times 2})}
    \\
    =
    n^2\left(
        \frac{1}{2}\norm{\tilde f-\tilde u}_{L^2((0,1)^2)}^2
        +n\alpha\norm{D\tilde u-\tilde w}_{\Meas((0,1)^2; \R^2)}
        +n^2\beta\norm{E\tilde w}_{\Meas((0,1)^2; \R^{2 \times 2})}
    \right).
    \end{multline}
This introduces the factor $1/\pixels=\abs{\Omega}^{-1/2}$ between rescaled $\alpha$, $\beta$.


\section{Experiments} \label{sec:experiments}
In this section we present some numerical experiments to verify the theoretical properties of the bilevel learning problems and the efficiency of the proposed solution algorithms. In particular, we exhaustively compare the performance of the new proposed cost functional with respect to well-known quality measures, showing a better behaviour of the new cost for the chosen tested images. The performance of the proposed BFGS algorithm, combined with the semismooth Newton method for the lower level problem, is also examined. 

\subsection{Gaussian denoising}
\label{sec:denoising}
We tested Algorithm \ref{algorithm:bfgs-learn} for $\TV$ 
and Algorithm \ref{algorithm:bfgs-learn-tgv2} for $\TGV^2$
Gaussian denoising parameter learning on various images. Here we report the
results for two images, the parrot image in Figure \ref{fig:res-dataset2:kodim23gray-crop}, and the geometric image in Figure \ref{fig:res-dataset11}.
We applied synthetic noise to the original images, such that the PSNR of the parrot image is $24.7$, and the PSNR of the geometric image is $24.8$.

In order to learn the regularisation parameter $\alpha$ for $\TV$, we 
picked initial $\alpha^0=0.1/\pixels$. For $\TGV^2$ initialisation
by $\TV$ was used as in Algorithm \ref{algorithm:bfgs-learn}.
We chose the other parameters of Algorithm \ref{algorithm:bfgs-learn}
as $c=1\ee^{-4}$, $\rho=1\ee^{-5}$, $\theta=1\ee{-8}$, and $\Theta=10$.
For the SSN denoising method the parameters $\gamma=100$ and
$\mu=1\ee^{-10}$ were chosen. 

We have included results for both the
$L^2$-squared cost functional $\costltwo$ and the Huberised
total variation cost functional $\costhubertv$. The learning
results are reported in Table \ref{table:res-dataset2}
for the parrot images, and Table \ref{table:res-dataset11}
for the geometric image. The denoising results with the discovered
parameters can be found in the aforementioned 
Figure \ref{fig:res-dataset2} and Figure \ref{fig:res-dataset11}.
We report the resulting optimal parameter values, the cost functional
value, PSNR, SSIM \cite{wang2004ssim}, as well as the number of
iterations taken by the outer BFGS method.

Our first observation is that all approaches successfully
learn a denoising parameter that gives a good-quality denoised
image. Secondly, we observe that the gradient cost functional
$\costhubertv$ performs visually and in terms of SSIM significantly
better for $\TGV^2$  parameter learning than the cost functional
$\costltwo$. In terms of PSNR the roles are reversed, as should be,
since the $\costltwo$ is equivalent to PSNR. This again confirms
that PSNR is a poor quality measure for images. For $\TV$ there
is no significant difference between different cost functionals
in terms of visual quality, although the PSNR and SSIM differ.

We also observe that the optimal $\TGV^2$ parameters 
$(\alpha^*, \beta^*)$ generally satisfy
$\beta^*/\alpha^* \in (0.75, 1.5)/\pixels$.
This confirms the earlier observed heuristic that
if $\pixels \approx 128,\, 256$ then
$\beta \in (1, 1.5) \alpha$ tends to be a good choice. 
As we can observe from Figure \ref{fig:res-dataset2} and
Figure \ref{fig:res-dataset11}, this optimal $\TGV^2$ parameter
choice also avoids the stair-casing effect that can be observed
with $\TV$ in the results. 

\begin{figure}
    \begin{subfigure}[t]{0.47\textwidth}
        \includegraphics[width=\textwidth]{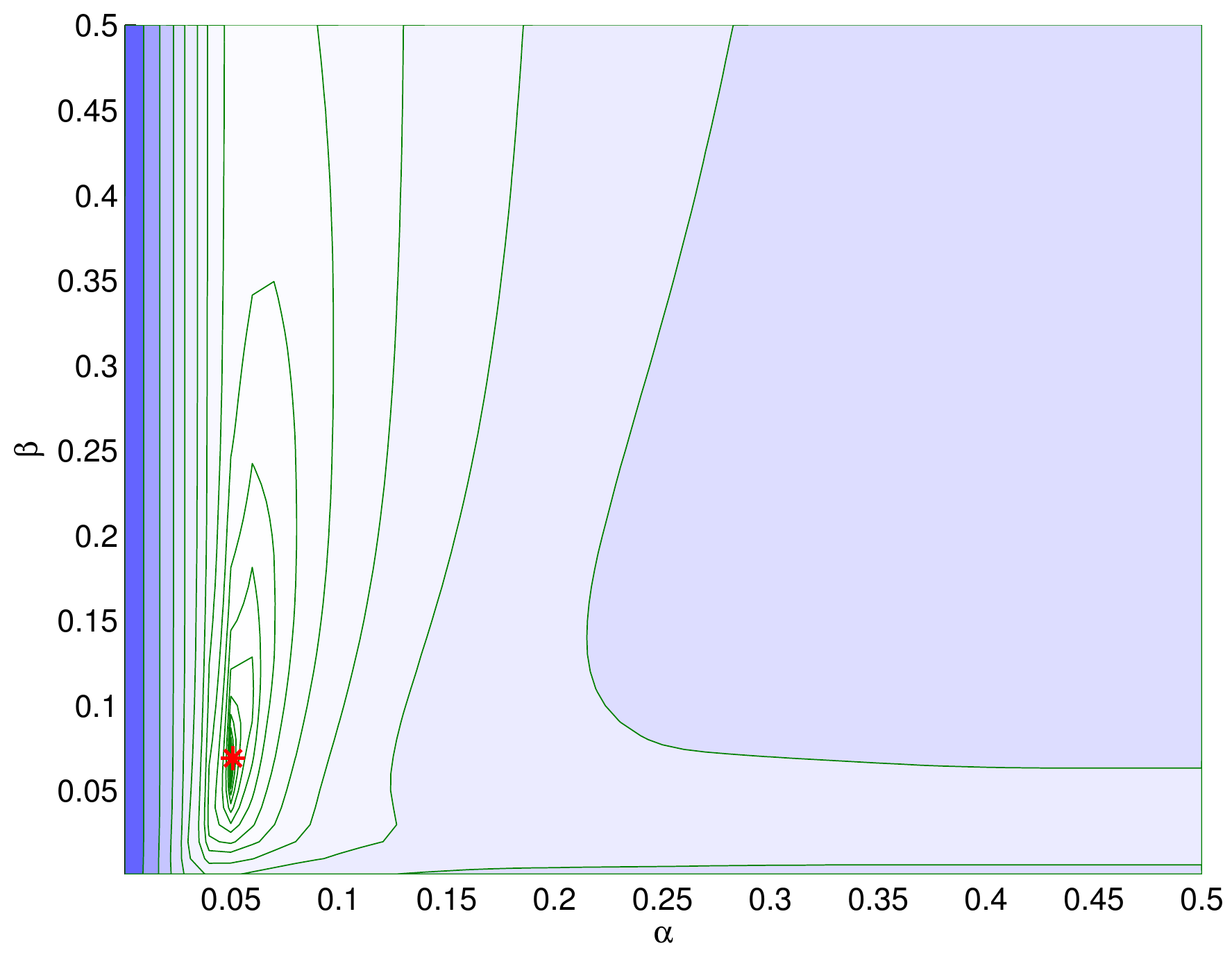}
        \caption{Parrot, $\TGV^2$, {\costhubertv} cost functional}
    \end{subfigure}
    \hfill
    \begin{subfigure}[t]{0.47\textwidth}
        \includegraphics[width=\textwidth]{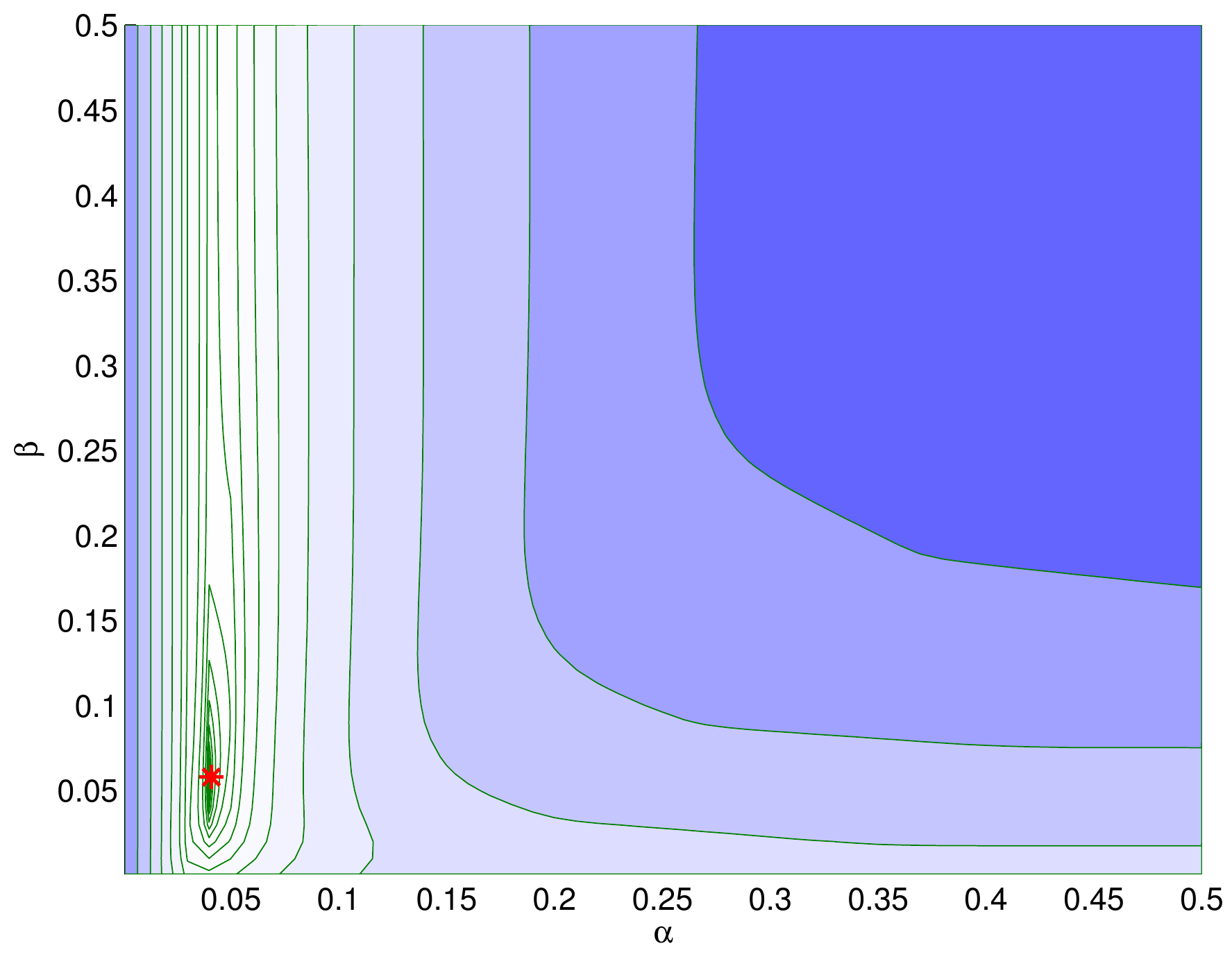}
        \caption{Parrot, $\TGV^2$, {\costltwo} cost functional}
    \end{subfigure}
    \caption{Cost functional value versus $(\alpha, \beta)$ for $\TGV^2$ denoising, 
        for the parrot test images, 
        for both {\costltwo} and {\costhubertv}
        cost functionals.
        The illustrations are contour plots of function value versus $(\alpha, \beta)$.
        }
    \label{fig:landscape-tgv2}
\end{figure}
In Figure \ref{fig:landscape-tgv2}, we have plotted by the red star the
discovered regularisation parameter $(\alpha^*, \beta^*)$ reported in
Figure \ref{fig:res-dataset2}. Studying the location of the red star, we
may conclude that Algorithm \ref{algorithm:bfgs-learn} 
and Algorithm \ref{algorithm:bfgs-learn-tgv2} manage
to find a nearly optimal parameter in very few BFGS 
iterations.

\begin{figure}
    \setlength{\imw}{0.33\textwidth}
    \centering
    \def\subfigprefix{fig:res-dataset2}
    \inplot{kodim23gray-crop}
            {Original image}
    \inplot{kodim23gray-crop-noisy}
            {Noisy image}
    \\
    \resplot{tgv2-huberonly-dataset2-linesearch1-bfgs1-beta0-uinitreg_0.100000_}
            {tgv2}{huberonly}{$\TVINIT$}
    \resplot{tgv2-l2-dataset2-linesearch1-bfgs1-beta0-uinitreg_0.100000_}
            {tgv2}{l2}{$\TVINIT$}
    \resplot{ictv-huberonly-dataset2-linesearch1-bfgs1-beta0-uinitreg_0.100000_}
            {ictv}{huberonly}{$\TVINIT$}
    \resplot{ictv-l2-dataset2-linesearch1-bfgs1-beta0-uinitreg_0.100000_}
            {ictv}{l2}{$\TVINIT$}
    \resplot{tv-huberonly-dataset2-linesearch1-bfgs1-beta0-uinitreg_0.100000_}
            {tv}{huberonly}{$0.1/\pixels$}
    \resplot{tv-l2-dataset2-linesearch1-bfgs1-beta0-uinitreg_0.100000_}
            {tv}{l2}{$0.1/\pixels$}
    \caption{Optimal denoising results for initial guess $\alphavec=\TVINIT$ 
             for $\TGV^2$ and $\alphavec=0.1/\pixels$ for $\TV$}
    \label{\subfigprefix}
\end{figure}

\begin{table}
    \caption{Quantified results for the parrot image ($\pixels=256=\text{image width/height in pixels}$)}   
    \label{table:res-dataset2}
    \centering
    \begin{tabular}{lll|lllll|l}
    Denoise & 
    Cost & 
    Initial $(\alpha, \beta)$ & 
    Result $(\alpha^*, \beta^*)$ & 
    Cost & 
    SSIM & 
    PSNR &
    Its. &
    Fig. \\
    \hline
    \restabll{tgv2-huberonly-dataset2-linesearch1-bfgs1-beta0-uinitreg_0.100000_}
            {tgv2}{huberonly}{$\TVINIT$}{fig:res-dataset2}
    \restabll{tgv2-l2-dataset2-linesearch1-bfgs1-beta0-uinitreg_0.100000_}
            {tgv2}{l2}{$\TVINIT$}{fig:res-dataset2}
    \restabll{ictv-huberonly-dataset2-linesearch1-bfgs1-beta0-uinitreg_0.100000_}
            {ictv}{huberonly}{$\TVINIT$}{fig:res-dataset2}
    \restabll{ictv-l2-dataset2-linesearch1-bfgs1-beta0-uinitreg_0.100000_}
            {ictv}{l2}{$\TVINIT$}{fig:res-dataset2}
    \restabll{tv-huberonly-dataset2-linesearch1-bfgs1-beta0-uinitreg_0.100000_}
            {tv}{huberonly}{$0.1/\pixels$}{fig:res-dataset2}
    \restabll{tv-l2-dataset2-linesearch1-bfgs1-beta0-uinitreg_0.100000_}
            {tv}{l2}{$0.1/\pixels$}{fig:res-dataset2}
    \end{tabular}
\end{table}

\begin{figure}
    \setlength{\imw}{0.33\textwidth}
    \centering
    \def\subfigprefix{fig:res-dataset11}
    \inplot
            {synth}
            {Original image}
    \inplot{synth-noisy}
            {Noisy image}
    \\
    \resplot{tgv2-huberonly-dataset11-linesearch1-bfgs1-beta0-uinitreg_0.200000_}
            {tgv2}{huberonly}{$\TVINIT$}
    \resplot{tgv2-l2-dataset11-linesearch1-bfgs1-beta0-uinitreg_0.200000_}
            {tgv2}{l2}{$\TVINIT$}
    \resplot{ictv-huberonly-dataset11-linesearch1-bfgs1-beta0-uinitreg_0.200000_}
            {ictv}{huberonly}{$\TVINIT$}
    \resplot{ictv-l2-dataset11-linesearch1-bfgs1-beta0-uinitreg_0.200000_}
            {ictv}{l2}{$\TVINIT$}
    \resplot{tv-huberonly-dataset11-linesearch1-bfgs1-beta0-uinitreg_0.200000_}
            {tv}{huberonly}{$0.1/\pixels$}
    \resplot{tv-l2-dataset11-linesearch1-bfgs1-beta0-uinitreg_0.200000_}
            {tv}{l2}{$0.1/\pixels$}
    \caption{Optimal denoising results for initial guess $\alphavec=\TVINIT$ 
             for $\TGV^2$ and $\alphavec=0.2/\pixels$ for $\TV$}
    \label{\subfigprefix}
\end{figure}

\begin{table}
    \caption{Quantified results for the synthetic image ($\pixels=256=\text{image width/height in pixels}$)}
    
    \label{table:res-dataset11}
    \centering
    \begin{tabular}{lll|lllll|l}
    Denoise & 
    Cost & 
    Initial $\alphavec$ & 
    Result $\alphavec^*$ & 
    Value & 
    SSIM & 
    PSNR &
    Its. &
    Fig. \\
    \hline
    \restabll{tgv2-huberonly-dataset11-linesearch1-bfgs1-beta0-uinitreg_0.200000_}
            {tgv2}{huberonly}{$\TVINIT$}{fig:res-dataset11}
    \restabll{tgv2-l2-dataset11-linesearch1-bfgs1-beta0-uinitreg_0.200000_}
            {tgv2}{l2}{$\TVINIT$}{fig:res-dataset11}
    \restabll{ictv-huberonly-dataset11-linesearch1-bfgs1-beta0-uinitreg_0.200000_}
            {ictv}{huberonly}{$\TVINIT$}{fig:res-dataset11}
    \restabll{ictv-l2-dataset11-linesearch1-bfgs1-beta0-uinitreg_0.200000_}
            {ictv}{l2}{$\TVINIT$}{fig:res-dataset11}
    \restabll{tv-huberonly-dataset11-linesearch1-bfgs1-beta0-uinitreg_0.200000_}
            {tv}{huberonly}{$0.1/\pixels$}{fig:res-dataset11}
    \restabll{tv-l2-dataset11-linesearch1-bfgs1-beta0-uinitreg_0.200000_}
            {tv}{l2}{$0.1/\pixels$}{fig:res-dataset11}
    \end{tabular}
\end{table}

\subsection{Statistical testing}

To obtain a statistically significant outlook to the performance of different regularisers and cost functionals, we made use of the Berkeley segmentation dataset BSDS300 \cite{MartinFTM01}, displayed in Figure \ref{fig:bsds300}.
We resized each image to 128 pixels on its shortest edge, and take the $128\times 128$ top-left square of the image.
To this data set, we applied pixelwise Gaussian noise of variance $\sigma^2=2,10$, and $20$.
We tested the performance of both cost functionals, $\costhubertv$ and $\costltwo$, as well as the $\TGV^2$, $\ICTV$, and $\TV$ regularisers on this dataset, for all noise levels. In the first instance, reported in Figures \ref{fig:bsds300-individual-huber-2}--\ref{fig:bsds300-individual-ltwo-20} (noise levels $\sigma^2=2,20$ only), and Tables \ref{table:bsds300-individual-2}--\ref{table:bsds300-individual-20}, we applied the proposed bi-level learning model on each image individually, to learn the optimal parameters specifically for that image, and a correponding noisy image for all of the noise levels separately. For the algorithm, we use the same parametrisation as in Section \ref{sec:denoising}.

\begin{figure}
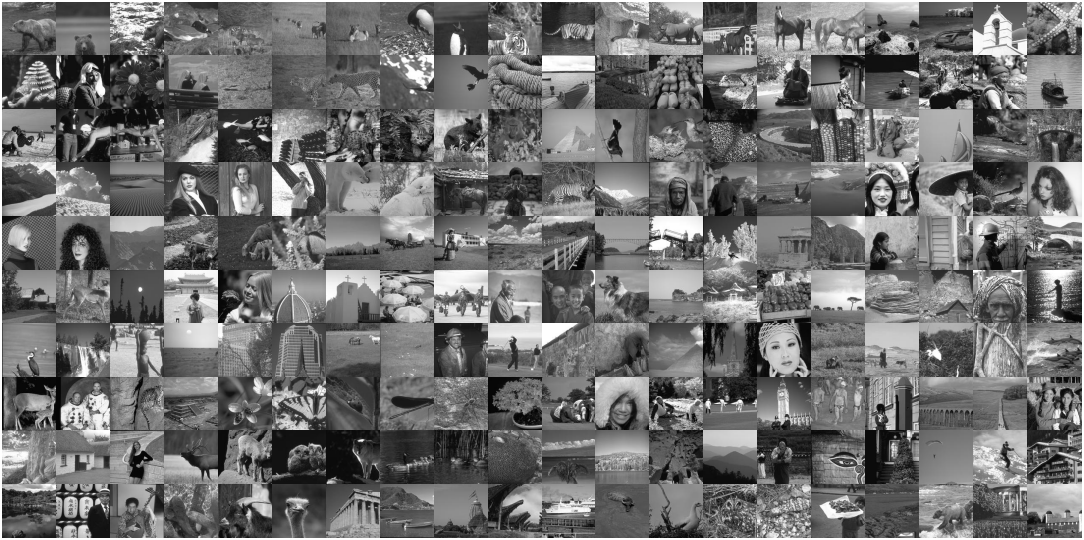

    \immatrix{img/bsds300}{orig}{20}
    \caption{The 200 images of the Berkeley segmentation dataset BSDS300 \cite{MartinFTM01}, cropped to be rectangular, keeping top left corner, and resized to $128 \times 128$.}
    \label{fig:bsds300}
\end{figure}

The figures display the noisy images, and indicate by colour coding the best result as judged by the structural similarity measure SSIM \cite{wang2004ssim}, PSNR, and the objective function value ($\costhubertv$ or $\costltwo$ cost). These criteria are, respectively, the top, middle, and bottom rows of colour-coding squares. Red square indicates that $\TV$ performed the best, green square indicates that $\ICTV$ performed the best, and blue square indicates that $\TGV^2$ performed the best---this is naturally for the optimal parameters for the corresponding regulariser and cost functional discovered by our algorithms. 

In the tables, we report the information in a more concise numerical fashion, indicating the mean, standard deviation, and median for all the different criteria (SSIM, PSNR, and cost functional value), as well as the number of images for which each regulariser performed the best. We recall that SSIM is normalised to $[0, 1]$, with higher value better. Moreover, we perform a statistical 95\% one-tailed paired t-test on each of the criteria, and a pair of regularisers, to see whether any pair of regularisers can be ordered. If so, this is indicated in the last row of each of the tables. 

Overall, studying the t-test and other data, the ordering of the regularisers appears to be
\[
    \ICTV > \TGV^2 > \TV.
\]
This is rather surprising, as in many specific examples, $\TGV^2$ has been observed to perform better than $\ICTV$, see our Figures \ref{fig:res-dataset2} and \ref{fig:res-dataset11}, as well as \cite{bredies2011tgv,benning2011higher}.
Only when the noise is high, appears $\TGV^2$ to come on par with $\ICTV$ with the $\costhubertv$ cost functional in Figure \ref{fig:bsds300-individual-huber-20} and Table \ref{table:bsds300-individual-20}.

A more detailed study of the results in Figures \ref{fig:bsds300-individual-huber-2}--\ref{fig:bsds300-individual-ltwo-20} seems to indicate that $\TGV^2$ performs better than $\ICTV$ when the image contains large smooth areas, but $\ICTV$ generally performs better for more chaotic images. This observation agrees with the results in Figures \ref{fig:res-dataset2} and \ref{fig:res-dataset11}, as well as \cite{bredies2011tgv,benning2011higher}, where the images are of the former type.

\newcommand{\matrixcapt}[3]{\caption{%
    Ordering of regularisers with individual learning, #3 cost, and noise variance $\sigma^2=#1$, on the 200 images of the BSDS300 dataset, #2. Best regulariser: red=$\TV$, green=$\ICTV$, blue=$\TGV^2$;
    top=SSIM, middle=PSNR, bottom=objective value.%
}}

\newcommand{\statcapt}[2]{\caption{%
    Regulariser performance with individual learning, $\costltwo$ and $\costhubertv$ costs and noise variance $\sigma^2=#1$; BSDS300 dataset, #2. 
}}

\begin{figure}
    \immatrixX{img/bsds300}{noisy2}{20}{resimg/comparisonmatrix-bsds300-noise10-huberonly}
    \matrixcapt{2}{resized}{$\costhubertv$}
    \label{fig:bsds300-individual-huber-2}
    \bigskip
    \immatrixX{img/bsds300}{noisy2}{20}{resimg/comparisonmatrix-bsds300-noise2-l2}
    \matrixcapt{2}{resized}{$\costltwo$}
    \label{fig:bsds300-individual-ltwo-2}
\end{figure}

\begin{figure}
    \immatrixX{img/bsds300}{noisy20}{20}{resimg/comparisonmatrix-bsds300-noise20-huberonly}
    \matrixcapt{20}{resized}{$\costhubertv$}
    \label{fig:bsds300-individual-huber-20}
    \bigskip
    \immatrixX{img/bsds300}{noisy20}{20}{resimg/comparisonmatrix-bsds300-noise20-l2}
    \matrixcapt{20}{resized}{$\costltwo$}
    \label{fig:bsds300-individual-ltwo-20}
\end{figure}

\begin{table}
    \footnotesize
    \begin{tabular}{l|r|r|r|r|r|r|r|r|r|r|r|r}
	 & \multicolumn{4}{c}{SSIM} & \multicolumn{4}{|c}{PSNR} & \multicolumn{4}{|c}{value}\\
	\hline
	 & mean & std & med & best & mean & std & med & best & mean & std & med & best\\
	\hline
	Noisy data & $0.978$ & $0.015$ & $0.981$ & 0 & $41.56$ & $0.86$ & $41.95$ & 0 & $2.9\ee^{4}$ & $3.1\ee^{2}$ & $2.9\ee^{4}$ & 0\\
	\hline
	\costname{huberonly}-\mathname{tv} & $0.988$ & $0.005$ & $0.989$ & 1 & $42.57$ & $1.10$ & $42.46$ & 5 & $2.4\ee^{4}$ & $3.7\ee^{3}$ & $2.5\ee^{4}$ & 1\\
	\costname{huberonly}-\mathname{ictv} & $0.989$ & $0.005$ & $0.990$ & 141 & $42.74$ & $1.16$ & $42.62$ & 143 & $2.3\ee^{4}$ & $3.9\ee^{3}$ & $2.4\ee^{4}$ & 137\\
	\costname{huberonly}-\mathname{tgv2} & $0.989$ & $0.005$ & $0.989$ & 58 & $42.70$ & $1.17$ & $42.55$ & 52 & $2.4\ee^{4}$ & $4.0\ee^{3}$ & $2.5\ee^{4}$ & 62\\
	\hline
	95\% t-test &\multicolumn{4}{c}{\mathname{ictv} $>$ \mathname{tgv2} $>$ \mathname{tv}} & \multicolumn{4}{|c}{\mathname{ictv} $>$ \mathname{tgv2} $>$ \mathname{tv}} & \multicolumn{4}{|c}{\mathname{ictv} $>$ \mathname{tgv2} $>$ \mathname{tv}}\\
	\hline
	\costname{l2}-\mathname{tv} & $0.988$ & $0.005$ & $0.988$ & 2 & $42.64$ & $1.14$ & $42.50$ & 2 & $0.41$ & $0.08$ & $0.43$ & 2\\
	\costname{l2}-\mathname{ictv} & $0.988$ & $0.005$ & $0.989$ & 142 & $42.79$ & $1.18$ & $42.64$ & 148 & $0.39$ & $0.08$ & $0.41$ & 148\\
	\costname{l2}-\mathname{tgv2} & $0.988$ & $0.005$ & $0.989$ & 56 & $42.76$ & $1.19$ & $42.58$ & 50 & $0.40$ & $0.08$ & $0.42$ & 50\\
	\hline
	95\% t-test &\multicolumn{4}{c}{\mathname{ictv} $>$ \mathname{tgv2} $>$ \mathname{tv}} & \multicolumn{4}{|c}{\mathname{ictv} $>$ \mathname{tgv2} $>$ \mathname{tv}} & \multicolumn{4}{|c}{\mathname{ictv} $>$ \mathname{tgv2} $>$ \mathname{tv}}\\
\end{tabular}

    \statcapt{2}{resized}
    \label{table:bsds300-individual-2}
    \bigskip
    \footnotesize
    \begin{tabular}{l|r|r|r|r|r|r|r|r|r|r|r|r}
	 & \multicolumn{4}{c}{SSIM} & \multicolumn{4}{|c}{PSNR} & \multicolumn{4}{|c}{value}\\
	\hline
	 & mean & std & med & best & mean & std & med & best & mean & std & med & best\\
	\hline
	Noisy data & $0.731$ & $0.120$ & $0.744$ & 0 & $27.72$ & $0.88$ & $28.09$ & 0 & $1.4\ee^{5}$ & $2.5\ee^{3}$ & $1.4\ee^{5}$ & 0\\
	\hline
	\costname{huberonly}-\mathname{tv} & $0.898$ & $0.036$ & $0.900$ & 4 & $31.28$ & $1.63$ & $30.97$ & 8 & $7.3\ee^{4}$ & $2.2\ee^{4}$ & $7.3\ee^{4}$ & 1\\
	\costname{huberonly}-\mathname{ictv} & $0.906$ & $0.034$ & $0.909$ & 139 & $31.54$ & $1.68$ & $31.21$ & 142 & $7.1\ee^{4}$ & $2.2\ee^{4}$ & $7.1\ee^{4}$ & 121\\
	\costname{huberonly}-\mathname{tgv2} & $0.905$ & $0.035$ & $0.907$ & 57 & $31.47$ & $1.72$ & $31.10$ & 50 & $7.1\ee^{4}$ & $2.2\ee^{4}$ & $7.1\ee^{4}$ & 78\\
	\hline
	95\% t-test &\multicolumn{4}{c}{\mathname{ictv} $>$ \mathname{tgv2} $>$ \mathname{tv}} & \multicolumn{4}{|c}{\mathname{ictv} $>$ \mathname{tgv2} $>$ \mathname{tv}} & \multicolumn{4}{|c}{\mathname{ictv} $>$ \mathname{tgv2} $>$ \mathname{tv}}\\
	\hline
	\costname{l2}-\mathname{tv} & $0.897$ & $0.033$ & $0.898$ & 9 & $31.54$ & $1.76$ & $31.15$ & 2 & $5.52$ & $1.89$ & $5.51$ & 2\\
	\costname{l2}-\mathname{ictv} & $0.903$ & $0.032$ & $0.903$ & 131 & $31.72$ & $1.76$ & $31.33$ & 148 & $5.30$ & $1.81$ & $5.35$ & 148\\
	\costname{l2}-\mathname{tgv2} & $0.902$ & $0.033$ & $0.903$ & 60 & $31.67$ & $1.80$ & $31.28$ & 50 & $5.38$ & $1.87$ & $5.39$ & 50\\
	\hline
	95\% t-test &\multicolumn{4}{c}{\mathname{ictv} $>$ \mathname{tgv2} $>$ \mathname{tv}} & \multicolumn{4}{|c}{\mathname{ictv} $>$ \mathname{tgv2} $>$ \mathname{tv}} & \multicolumn{4}{|c}{\mathname{ictv} $>$ \mathname{tgv2} $>$ \mathname{tv}}\\
\end{tabular}

    \statcapt{10}{resized}
    \label{table:bsds300-individual-10}
    \bigskip
    \footnotesize
    \begin{tabular}{l|r|r|r|r|r|r|r|r|r|r|r|r}
	 & \multicolumn{4}{c}{SSIM} & \multicolumn{4}{|c}{PSNR} & \multicolumn{4}{|c}{value}\\
	\hline
	 & mean & std & med & best & mean & std & med & best & mean & std & med & best\\
	\hline
	Noisy data & $0.505$ & $0.143$ & $0.516$ & 0 & $21.80$ & $0.92$ & $22.14$ & 0 & $2.8\ee^{5}$ & $7.9\ee^{3}$ & $2.8\ee^{5}$ & 0\\
	\hline
	\costname{huberonly}-\mathname{tv} & $0.795$ & $0.063$ & $0.799$ & 7 & $27.27$ & $1.64$ & $27.02$ & 11 & $1.0\ee^{5}$ & $3.5\ee^{4}$ & $9.7\ee^{4}$ & 1\\
	\costname{huberonly}-\mathname{ictv} & $0.810$ & $0.061$ & $0.814$ & 120 & $27.52$ & $1.66$ & $27.24$ & 125 & $9.7\ee^{4}$ & $3.4\ee^{4}$ & $9.6\ee^{4}$ & 79\\
	\costname{huberonly}-\mathname{tgv2} & $0.808$ & $0.062$ & $0.814$ & 73 & $27.50$ & $1.74$ & $27.15$ & 64 & $9.8\ee^{4}$ & $3.5\ee^{4}$ & $9.5\ee^{4}$ & 120\\
	\hline
	95\% t-test &\multicolumn{4}{c}{\mathname{ictv} $>$ \mathname{tgv2} $>$ \mathname{tv}} & \multicolumn{4}{|c}{\mathname{ictv}, \mathname{tgv2} $>$ \mathname{tv}} & \multicolumn{4}{|c}{\mathname{ictv}, \mathname{tgv2} $>$ \mathname{tv}}\\
	\hline
	\costname{l2}-\mathname{tv} & $0.802$ & $0.056$ & $0.804$ & 8 & $27.70$ & $1.93$ & $27.28$ & 0 & $13.65$ & $5.53$ & $13.14$ & 0\\
	\costname{l2}-\mathname{ictv} & $0.811$ & $0.056$ & $0.816$ & 126 & $27.86$ & $1.91$ & $27.45$ & 138 & $13.14$ & $5.22$ & $12.62$ & 138\\
	\costname{l2}-\mathname{tgv2} & $0.810$ & $0.057$ & $0.814$ & 66 & $27.83$ & $1.94$ & $27.41$ & 62 & $13.28$ & $5.38$ & $12.77$ & 62\\
	\hline
	95\% t-test &\multicolumn{4}{c}{\mathname{ictv} $>$ \mathname{tgv2} $>$ \mathname{tv}} & \multicolumn{4}{|c}{\mathname{ictv} $>$ \mathname{tgv2} $>$ \mathname{tv}} & \multicolumn{4}{|c}{\mathname{ictv} $>$ \mathname{tgv2} $>$ \mathname{tv}}\\
\end{tabular}

    \statcapt{20}{resized}
    \label{table:bsds300-individual-20}
\end{table}

One possible reason for the better performance of $\ICTV$ could be that $\TGV^2$ has more degrees of freedom---in $\ICTV$ we essentially constrain $w=\grad v$ for some function $v$---and therefore overfits to the noisy data, until the noise level becomes so high that overfitting would become too high for any parameter.
To see whether this is true, we also performed batch learning, learning a single set of parameters for all images with the same noise level. That is, we studied the model
\[
    \min_{\alphavec} \sum_{i=1}^N F_i(u_{i,\alphavec})
    \quad
    \text{s.t.}\quad
    u_{i,\alphavec} \in \argmin_{u\in H^1(\Omega)} \frac{1}{2}\norm{f_i-u}_{L^2(\Omega)}^2
        + R_{\alphavec}^{\gamma,\mu}(u),
\]
with
\[
    F_i(u)=\frac{1}{2}\norm{f_{0,i}-u}^2_{L^2(\Omega)},
    \quad\text{or}\quad
    F_i(u)=\int_\Omega \abs{\grad(f_{0,i}-u)}_\gamma \d x,
\]
where $\vec \alpha=(\alpha, \beta)$, $f_1,\ldots,f_N$ are the $N=200$ noisy images with the same noise level, and $f_{0,1},\ldots,f_{0,N}$ the original noise free images.

The results are in Figures \ref{fig:bsds300-batch-huber-2}--\ref{fig:bsds300-batch-ltwo-20} (noise levels $\sigma^2=2,20$ only), and Tables \ref{table:bsds300-batch-2}--\ref{table:bsds300-batch-20}. The results are still roughly the same as with individual learning. Again, only with high noise in Table \ref{table:bsds300-batch-20}, does $\TGV^2$ not lose to $\ICTV$.
Another interesting observation is that $\TV$ starts to be frequently the best regulariser for individual images, although still statistically does worse than either $\ICTV$ or $\TGV^2$.

For the first image of the data set, $\ICTV$ does in all of the Figures \ref{fig:bsds300-individual-huber-2}--\ref{fig:bsds300-batch-ltwo-20} better than $\TGV^2$, while for the second image, the situation is reversed. We have highlighted these two images for the $\costhubertv$ cost in Figures \ref{fig:ictv-better-2}--\ref{fig:tgv-better-20}, for both noise levels $\sigma=2$ and $\sigma=20$. In the case where $\ICTV$ does better, hardly any difference can be observed by the eye, while for second image $\TGV^2$ clearly has less stair-casing in the smooth areas of the image, especially with the noise level $\sigma=20$.

Based on this study, it therefore seems that $\ICTV$ is the most reliable regulariser of the ones tested, when the type of image being processed is unknown, and low SSIM, PSNR or $\costhubertv$ cost functional value is desired. But as can be observed for individual images, it can within large smooth areas exhibit artefacts that are avoided by the use of $\TGV^2$.


\newcommand{\matrixcaptsim}[3]{\caption{%
    Ordering of regularisers with batch learning, #3 cost, and noise variance $\sigma^2=#1$, on the 200 images of the BSDS300 dataset, #2. Best regulariser: red=$\TV$, green=$\ICTV$, blue=$\TGV^2$;
    top=SSIM, middle=PSNR, bottom=objective value.%
}}

\newcommand{\statcaptsim}[2]{\caption{%
    Regulariser performance with batch learning, $\costhubertv$ and $\costltwo$ costs, noise variance $\sigma^2=#1$; BSDS300 dataset, #2. 
}}

\begin{figure}
    \immatrixX{img/bsds300}{noisy2}{20}{resimg/comparisonmatrix-bsds300-ALL-noise10-huberonly}
    \matrixcaptsim{2}{resized}{$\costhubertv$}
    \label{fig:bsds300-batch-huber-2}
    \bigskip
    \immatrixX{img/bsds300}{noisy2}{20}{resimg/comparisonmatrix-bsds300-ALL-noise2-l2}
    \matrixcaptsim{2}{resized}{$\costltwo$}
    \label{fig:bsds300-batch-ltwo-2}
\end{figure}

\begin{figure}
    \immatrixX{img/bsds300}{noisy20}{20}{resimg/comparisonmatrix-bsds300-ALL-noise20-huberonly}
    \matrixcaptsim{20}{resized}{$\costhubertv$}
    \label{fig:bsds300-batch-huber-20}
    \bigskip
    \immatrixX{img/bsds300}{noisy20}{20}{resimg/comparisonmatrix-bsds300-ALL-noise20-l2}
    \matrixcaptsim{20}{resized}{$\costltwo$}
    \label{fig:bsds300-batch-ltwo-20}
\end{figure}

\begin{table}
    \footnotesize
    \begin{tabular}{l|r|r|r|r|r|r|r|r|r|r|r|r}
	 & \multicolumn{4}{c}{SSIM} & \multicolumn{4}{|c}{PSNR} & \multicolumn{4}{|c}{value}\\
	\hline
	 & mean & std & med & best & mean & std & med & best & mean & std & med & best\\
	\hline
	Noisy data & $0.978$ & $0.015$ & $0.981$ & 16 & $41.56$ & $0.86$ & $41.95$ & 24 & $2.9\ee^{4}$ & $3.1\ee^{2}$ & $2.9\ee^{4}$ & 16\\
	\hline
	\costname{huberonly}-\mathname{tv} & $0.987$ & $0.006$ & $0.988$ & 23 & $42.43$ & $1.07$ & $42.37$ & 21 & $2.5\ee^{4}$ & $3.4\ee^{3}$ & $2.5\ee^{4}$ & 20\\
	\costname{huberonly}-\mathname{ictv} & $0.988$ & $0.006$ & $0.989$ & 119 & $42.56$ & $1.06$ & $42.51$ & 135 & $2.4\ee^{4}$ & $3.5\ee^{3}$ & $2.5\ee^{4}$ & 113\\
	\costname{huberonly}-\mathname{tgv2} & $0.987$ & $0.006$ & $0.989$ & 42 & $42.51$ & $1.09$ & $42.44$ & 20 & $2.4\ee^{4}$ & $3.6\ee^{3}$ & $2.5\ee^{4}$ & 51\\
	\hline
	95\% t-test &\multicolumn{4}{c}{\mathname{ictv} $>$ \mathname{tgv2} $>$ \mathname{tv}} & \multicolumn{4}{|c}{\mathname{ictv} $>$ \mathname{tgv2} $>$ \mathname{tv}} & \multicolumn{4}{|c}{\mathname{ictv} $>$ \mathname{tgv2} $>$ \mathname{tv}}\\
	\hline
	\costname{l2}-\mathname{tv} & $0.986$ & $0.007$ & $0.987$ & 13 & $42.46$ & $0.95$ & $42.43$ & 17 & $0.42$ & $0.07$ & $0.43$ & 17\\
	\costname{l2}-\mathname{ictv} & $0.987$ & $0.007$ & $0.988$ & 139 & $42.57$ & $0.95$ & $42.56$ & 128 & $0.41$ & $0.07$ & $0.42$ & 128\\
	\costname{l2}-\mathname{tgv2} & $0.987$ & $0.007$ & $0.988$ & 38 & $42.53$ & $0.97$ & $42.51$ & 40 & $0.41$ & $0.07$ & $0.42$ & 40\\
	\hline
	95\% t-test &\multicolumn{4}{c}{\mathname{ictv} $>$ \mathname{tgv2} $>$ \mathname{tv}} & \multicolumn{4}{|c}{\mathname{ictv} $>$ \mathname{tgv2} $>$ \mathname{tv}} & \multicolumn{4}{|c}{\mathname{ictv} $>$ \mathname{tgv2} $>$ \mathname{tv}}\\
\end{tabular}

    \statcaptsim{2}{resized}
    \label{table:bsds300-batch-2}
    \bigskip
    \footnotesize
    \begin{tabular}{l|r|r|r|r|r|r|r|r|r|r|r|r}
	 & \multicolumn{4}{c}{SSIM} & \multicolumn{4}{|c}{PSNR} & \multicolumn{4}{|c}{value}\\
	\hline
	 & mean & std & med & best & mean & std & med & best & mean & std & med & best\\
	\hline
	Noisy data & $0.731$ & $0.120$ & $0.744$ & 8 & $27.72$ & $0.88$ & $28.09$ & 2 & $1.4\ee^{5}$ & $2.5\ee^{3}$ & $1.4\ee^{5}$ & 0\\
	\hline
	\costname{huberonly}-\mathname{tv} & $0.893$ & $0.035$ & $0.897$ & 23 & $31.24$ & $1.87$ & $30.94$ & 23 & $7.5\ee^{4}$ & $2.2\ee^{4}$ & $7.3\ee^{4}$ & 18\\
	\costname{huberonly}-\mathname{ictv} & $0.897$ & $0.034$ & $0.902$ & 134 & $31.36$ & $1.81$ & $31.11$ & 150 & $7.4\ee^{4}$ & $2.2\ee^{4}$ & $7.2\ee^{4}$ & 107\\
	\costname{huberonly}-\mathname{tgv2} & $0.896$ & $0.035$ & $0.901$ & 35 & $31.31$ & $1.88$ & $31.01$ & 25 & $7.4\ee^{4}$ & $2.3\ee^{4}$ & $7.2\ee^{4}$ & 75\\
	\hline
	95\% t-test &\multicolumn{4}{c}{\mathname{ictv} $>$ \mathname{tgv2} $>$ \mathname{tv}} & \multicolumn{4}{|c}{\mathname{ictv} $>$ \mathname{tgv2} $>$ \mathname{tv}} & \multicolumn{4}{|c}{\mathname{ictv}, \mathname{tgv2} $>$ \mathname{tv}}\\
	\hline
	\costname{l2}-\mathname{tv} & $0.887$ & $0.035$ & $0.889$ & 29 & $31.31$ & $1.50$ & $31.15$ & 25 & $5.72$ & $1.91$ & $5.51$ & 25\\
	\costname{l2}-\mathname{ictv} & $0.889$ & $0.036$ & $0.893$ & 127 & $31.41$ & $1.44$ & $31.28$ & 131 & $5.57$ & $1.83$ & $5.37$ & 131\\
	\costname{l2}-\mathname{tgv2} & $0.888$ & $0.035$ & $0.891$ & 44 & $31.38$ & $1.50$ & $31.20$ & 44 & $5.64$ & $1.90$ & $5.44$ & 44\\
	\hline
	95\% t-test &\multicolumn{4}{c}{\mathname{ictv} $>$ \mathname{tgv2} $>$ \mathname{tv}} & \multicolumn{4}{|c}{\mathname{ictv} $>$ \mathname{tgv2} $>$ \mathname{tv}} & \multicolumn{4}{|c}{\mathname{ictv} $>$ \mathname{tgv2} $>$ \mathname{tv}}\\
\end{tabular}

    \statcaptsim{10}{resized}
    \label{table:bsds300-batch-10}
    \bigskip
    \footnotesize
    \begin{tabular}{l|r|r|r|r|r|r|r|r|r|r|r|r}
	 & \multicolumn{4}{c}{SSIM} & \multicolumn{4}{|c}{PSNR} & \multicolumn{4}{|c}{value}\\
	\hline
	 & mean & std & med & best & mean & std & med & best & mean & std & med & best\\
	\hline
	Noisy data & $0.505$ & $0.143$ & $0.516$ & 4 & $21.80$ & $0.92$ & $22.14$ & 1 & $2.8\ee^{5}$ & $7.9\ee^{3}$ & $2.8\ee^{5}$ & 0\\
	\hline
	\costname{huberonly}-\mathname{tv} & $0.789$ & $0.067$ & $0.798$ & 18 & $27.37$ & $2.13$ & $26.98$ & 24 & $1.0\ee^{5}$ & $3.7\ee^{4}$ & $9.8\ee^{4}$ & 14\\
	\costname{huberonly}-\mathname{ictv} & $0.795$ & $0.065$ & $0.804$ & 139 & $27.46$ & $2.10$ & $27.05$ & 141 & $1.0\ee^{5}$ & $3.6\ee^{4}$ & $9.6\ee^{4}$ & 91\\
	\costname{huberonly}-\mathname{tgv2} & $0.794$ & $0.066$ & $0.804$ & 39 & $27.44$ & $2.12$ & $27.04$ & 34 & $1.0\ee^{5}$ & $3.7\ee^{4}$ & $9.6\ee^{4}$ & 95\\
	\hline
	95\% t-test &\multicolumn{4}{c}{\mathname{ictv} $>$ \mathname{tgv2} $>$ \mathname{tv}} & \multicolumn{4}{|c}{\mathname{ictv} $>$ \mathname{tgv2} $>$ \mathname{tv}} & \multicolumn{4}{|c}{\mathname{tgv2} $>$ \mathname{ictv} $>$ \mathname{tv}}\\
	\hline
	\costname{l2}-\mathname{tv} & $0.786$ & $0.053$ & $0.790$ & 31 & $27.50$ & $1.71$ & $27.27$ & 33 & $14.11$ & $5.78$ & $13.16$ & 33\\
	\costname{l2}-\mathname{ictv} & $0.790$ & $0.054$ & $0.790$ & 123 & $27.56$ & $1.64$ & $27.37$ & 119 & $13.84$ & $5.54$ & $12.75$ & 119\\
	\costname{l2}-\mathname{tgv2} & $0.789$ & $0.053$ & $0.793$ & 46 & $27.55$ & $1.70$ & $27.33$ & 48 & $13.93$ & $5.73$ & $12.95$ & 48\\
	\hline
	95\% t-test &\multicolumn{4}{c}{\mathname{ictv}, \mathname{tgv2} $>$ \mathname{tv}} & \multicolumn{4}{|c}{\mathname{ictv}, \mathname{tgv2} $>$ \mathname{tv}} & \multicolumn{4}{|c}{\mathname{ictv} $>$ \mathname{tgv2} $>$ \mathname{tv}}\\
\end{tabular}

    \statcaptsim{20}{resized}
    \label{table:bsds300-batch-20}
\end{table}

\def\EXictv{100075}
\def\EXtgv{100080}

\newcommand{\sub}[3]{%
    \begin{subfigure}[t]{0.32\textwidth}%
        \includegraphics[width=\textwidth]{{img/bsds300/#2_#3}.jpg}%
        \subcaption{#1}%
    \end{subfigure}%
}

\nprounddigits{2}
\npdecimalsign{.}

\newcommand{\subx}[5]{%
    \begin{subfigure}[t]{0.32\textwidth}%
        \input{resimg/#1-#2-#5#3noise#4-linesearch1-bfgs1-beta0-uinitreg_0.200000_-vals.tex}
        \includegraphics[width=\textwidth]{{resimg/#1-#2-#5#3noise#4-linesearch1-bfgs1-beta0-uinitreg_0.200000_}.png}%
        \def\a{dataset}%
        \def\b{#5}%
        \caption{\ifx\a\b Individual\else Batch\fi~\costname{#2}-\mathname{#1}, 
        \\PSNR=\numprint{\RESpsnr}, SSIM=\numprint{\RESssim}}%
    \end{subfigure}%
}

\begin{figure}
    \centering
    \sub{Original}{orig}{\EXictv}
    \subx{tgv2}{huberonly}{\EXictv}{2}{dataset}
    \subx{tgv2}{huberonly}{\EXictv}{2}{bsds300-ALL}
    \sub{Noisy, $\sigma=2$}{noisy2}{\EXictv}
    \subx{ictv}{huberonly}{\EXictv}{2}{dataset}
    \subx{ictv}{huberonly}{\EXictv}{2}{bsds300-ALL}
    \caption{Image for which $\ICTV$ performs better than $\TGV^2$, $\sigma=2$}
    \label{fig:ictv-better-2}
\end{figure}

\begin{figure}
    \centering
    \sub{Original}{orig}{\EXictv}
    \subx{tgv2}{huberonly}{\EXictv}{20}{dataset}
    \subx{tgv2}{huberonly}{\EXictv}{20}{bsds300-ALL}
    \sub{Noisy, $\sigma=20$}{noisy20}{\EXictv}
    \subx{ictv}{huberonly}{\EXictv}{20}{dataset}
    \subx{ictv}{huberonly}{\EXictv}{20}{bsds300-ALL}
    \caption{Image for which $\ICTV$ performs better than $\TGV^2$, $\sigma=20$}
    \label{fig:ictv-better-20}
\end{figure}

\begin{figure}
    \centering
    \sub{Original}{orig}{\EXtgv}
    \subx{tgv2}{huberonly}{\EXtgv}{2}{dataset}
    \subx{tgv2}{huberonly}{\EXtgv}{2}{bsds300-ALL}
    \sub{Noisy, $\sigma=2$}{noisy2}{\EXtgv}
    \subx{ictv}{huberonly}{\EXtgv}{2}{dataset}
    \subx{ictv}{huberonly}{\EXtgv}{2}{bsds300-ALL}
    \caption{Image for which $\TGV^2$ performs better than $\ICTV$, $\sigma=2$}
    \label{fig:tgv-better-2}
\end{figure}

\begin{figure}
    \centering
    \sub{Original}{orig}{\EXtgv}
    \subx{tgv2}{huberonly}{\EXtgv}{20}{dataset}
    \subx{tgv2}{huberonly}{\EXtgv}{20}{bsds300-ALL}
    \sub{Noisy, $\sigma=20$}{noisy20}{\EXtgv}
    \subx{ictv}{huberonly}{\EXtgv}{20}{dataset}
    \subx{ictv}{huberonly}{\EXtgv}{20}{bsds300-ALL}
    \caption{Image for which $\TGV^2$ performs better than $\ICTV$, $\sigma=20$}
    \label{fig:tgv-better-20}
\end{figure}

\subsection{The choice of cost functional}

The $\costltwo$ cost functional naturally obtains better PSNR than $\costhubertv$, as the two former are equivalent. Comparing the results for the two cost funtionals in Tables \ref{table:bsds300-individual-2}--\ref{table:bsds300-individual-20}, we may however observe that for low noise levels $\sigma^2=2,10$, and generally for batch learning, $\costhubertv$ attains better (higher) SSIM.
Since SSIM better captures \cite{wang2004ssim} the visual quality of images than PSNR, this recommends the use of our novel total variation cost functional $\costhubertv$. Of course, one might attempt to optimise the SSIM. This is however a non-convex functional, which will pose additional numerical challenges avoided by the convex total variation cost.

\section*{Conclusion and Outlook} In this paper we propose a bilevel optimisation method in function space for learning the optimal choice of parameters in higher-order total variation regularisation. We present a rigorous analysis of this optimisation problem as well as a numerical discussion in the context of image denoising. In particular, we make use of the bilevel learning approach to compare the performance -- in terms of returned image quality -- of TV, ICTV and TGV regularisation. A statistical analysis, carried out on a dataset of 200 images, suggest that ICTV performs slightly better than TGV, and both perform better than TV, in average. For denoising of images with a high noise level ICTV and TGV score comparably well. For images with large smooth areas TGV performs better than ICTV.

Moreover, we propose a new cost functional for the bilevel learning problem, which exhibits interesting theoretical properties and has a better behaviour with respect to the PSNR related L$^2$ cost used previously in the literature. This study raises the question of other, alternative cost functionals. For instance, one could be tempted to used the SSIM as cost, but its non-convexity might present several analytical and numerical difficulties. The new cost functional, proposed in this paper, turns out to be a good compromise between image quality measure and analytically tractable cost term. 

\section*{Acknowledgements}
This project has been supported by King Abdullah University of Science and Technology (KAUST) Award No. KUK-I1-007-43, EPSRC grants Nr. EP/J009539/1 and Nr. EP/M00483X/1, the Escuela Polit\'ecnica Nacional de Quito under award PIS 12-14 and the MATHAmSud project SOCDE `Sparse Optimal Control of Differential Equations'. While in Quito, T. Valkonen has moreover been supported by a Prometeo scholarship of the Senescyt (Ecuadorian Ministry of Science, Technology, Education, and Innovation).

{\small
\bibliographystyle{plain}
\bibliography{abbrevs,bib,bib-own}
}
\end{document}